\DeclareMathOperator{\Ker}{Ker}
\DeclareMathOperator{\Hom}{Hom}
\DeclareMathOperator{\rk}{rk}
\DeclareMathOperator{\odd}{odd}
\DeclareMathOperator{\com}{compl}
\DeclareMathOperator{\ord}{ord}
\DeclareMathOperator{\str}{star}
\DeclareMathOperator{\IC}{IC}
\DeclareMathOperator{\Flats}{Flats}
\DeclareMathOperator{\Cl}{Cl}
\DeclareMathOperator{\drk}{drk}
\DeclareMathOperator{\Gr}{Gr}
\DeclareMathOperator{\Fl}{Fl}
\newcommand{\ttt}{\mathfrak{t}}
\newcommand{\hhh}{\mathfrak{h}}
\newcounter{stmcounter}[section]
\numberwithin{equation}{section}
\newcounter{thmMaincounter}
\theoremstyle{plain}
\newtheorem{thmM}[thmMaincounter]{Theorem}
\newtheorem{prop}[stmcounter]{Proposition}
\newtheorem{lem}[stmcounter]{Lemma}
\newtheorem{algor}[stmcounter]{Algorithm}
\theoremstyle{definition}
\newtheorem{defin}[stmcounter]{Definition}
\theoremstyle{remark}
\newtheorem{ex}[stmcounter]{Example}
\newtheorem{rem}[stmcounter]{Remark}
\newtheorem{con}[stmcounter]{Construction}
\newcommand{\CP}{\mathbb{C}P}
\newcommand{\A}{\mathcal{A}}
\newcommand{\Pp}{\mathbb{P}}
\newcommand{\Stg}{S_{tg}}
\newcommand{\G}{\mathcal{G}}
\newcommand{\Hh}{\mathcal{H}}
\def\Co{\mathbb C}
\def\Ro{\mathbb R}
\def\Qo{\mathbb Q}
\def\Zo{\mathbb Z}
\begin{document}
\title[Matroids in toric topology]{Matroids in toric topology}

\author{Anton Ayzenberg}
\address{Faculty of computer science, National Research University Higher School of Economics, Russian Federation}
\email{ayzenberga@gmail.com}

\author{Vladislav Cherepanov}
\address{Faculty of computer science, National Research University Higher School of Economics, Russian Federation; Current address: Mathematical Institute, University of Oxford, Oxford OX2 6GG, United Kingdom}
\email{vilamsenton@gmail.com, vladislav.cherepanov@queens.ox.ac.uk}

\thanks{The article was prepared within the framework of the HSE University Basic Research Program}

\keywords{Torus action, invariant submanifold, homology of posets, matroids, geometric lattices, GKM-theory}

\subjclass[2020]{Primary: 57S12, 55N91, 13F55, 06A06 Secondary: 55U10, 57R91, 13H10}

\begin{abstract}
In this paper we study general torus actions on manifolds with isolated fixed points from combinatorial point of view. The main object of study is the poset of face submanifolds of such actions. We introduce the notion of a locally geometric poset --- the graded poset locally modelled by geometric lattices, and prove that for any torus action, the poset of its faces is locally geometric. Next we discuss the relations between posets of faces and GKM-theory. In particular, we define the face poset of an abstract GKM-graph and show how to reconstruct the face poset of a manifold from its GKM-graph.
\end{abstract}

\maketitle


\section{Introduction}\label{secIntro}

Toric topology studies actions of a compact torus $T^k$ on closed smooth manifolds $X^{2n}$ in terms of related combinatorial structures. The classical examples are given by smooth toric varieties~\cite{CLSch} (which are classified by their simplicial fans), and their topological analogues --- quasitoric manifolds~\cite{BPnew} (classified by characteristic pairs). In both cases the half--dimensional torus $T^n$ acts on a manifold $X^{2n}$ with $H^{\odd}(X^{2n})=0$, and it happens that the poset $S(X^{2n})$ of $T^n$-invariant submanifolds in $X^{2n}$ is a certain cell subdivision of a topological disc, see~\cite{MasPan}. The poset $S(X^{2n})$ has nice acyclicity properties: not only $S(X^{2n})$ is acyclic (which is obvious since it has the greatest element), but its skeleta and the links of its simplices also are. These acyclicity properties are intimately related to the Cohen--Macaulay property of face algebras. Face algebra is isomorphic to equivariant cohomology of $X$ (see~\cite{BPnew} for details of all constructions).

In this paper we study combinatorial structures related to more general torus actions. For an action of $T=T^k$ on $X=X^{2n}$ with isolated fixed points, we study combinatorial structure of the poset $S(X)$ of face submanifolds of $X$.

We introduce the following definition.

\begin{defin}
A finite poset $S$ is called locally geometric of rank $k$ if the following holds
\begin{enumerate}
  \item $S$ is graded. All minimal elements have rank $0$.
  \item $S$ has the greatest element $\hat{1}$ with $\rk \hat{1}=k$.
  \item For any element $s\in S$, the upper order ideal $S_{\geqslant s}$ is a geometric lattice of rank $k-\rk s$.
\end{enumerate}
\end{defin}

Our main result is the following

\begin{thmM}\label{thmMainUpperAcycl}
Consider an effective action of $T=T^k$ on a manifold $X=X^{2n}$ with $0<\#X^T<\infty$, and let $S(X)$ be the poset of face submanifolds of $X$. Then $S(X)$ is locally geometric of rank $k$. For a fixed point $x\in X^T$, the poset $S(X)_{\geqslant x}$ is isomorphic to the lattice of flats of the linear matroid determined by the set of weights $\alpha_{x,1},\ldots,\alpha_{x,n}\in\Hom(T,S^1)\otimes\Qo$ of the tangent representation $T_xX$.
\end{thmM}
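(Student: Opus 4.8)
The plan is to analyze the local structure of a torus action near a fixed point and globalize. First I would recall the standard slice-type description: near an isolated fixed point $x\in X^T$, the action is locally modelled by the tangent representation $T_xX$, which decomposes as a sum of $n$ complex one-dimensional representations with weights $\alpha_{x,1},\ldots,\alpha_{x,n}\in\Hom(T,S^1)\otimes\Qo$. A face submanifold through $x$ is a connected component of the fixed-point set $X^H$ of some subtorus $H\leqslant T$; locally near $x$ such a component corresponds to the subrepresentation spanned by those coordinate lines whose weights vanish on $H$, i.e.\ to a subset $I\subseteq\{1,\ldots,n\}$ that is ``closed'' in the sense that $I=\{i : \alpha_{x,i}\in\operatorname{span}_{\Qo}\{\alpha_{x,j}: j\in I\}\}$. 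These closed subsets are exactly the flats of the linear matroid $M_x$ on the weights $\alpha_{x,1},\ldots,\alpha_{x,n}$, and this gives a bijection between germs of face submanifolds at $x$ and flats of $M_x$, rank-reversingly: a flat of rank $r$ in $M_x$ gives a face submanifold of dimension $2(n-\ldots)$ but, more to the point, of \emph{corank} $r$, hence of rank $k-r$ in $S(X)_{\geqslant x}$ once we check the dimension/rank bookkeeping. The key point here is that two distinct closed subsets $I\ne J$ give germs of \emph{distinct} face submanifolds, and conversely every face submanifold containing $x$ meets the slice in one of these linear subspaces; this needs the hypothesis that fixed points are isolated so the slice representation has no trivial summand and the combinatorics is governed purely by the weights.

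Second I would promote this local bijection to a poset isomorphism $S(X)_{\geqslant x}\cong\operatorname{Flats}(M_x)$. The order on face submanifolds is reverse inclusion of the submanifolds (larger submanifold $=$ smaller element of $S$), which near $x$ corresponds to inclusion of the coordinate subspaces, i.e.\ inclusion of subsets $I$, i.e.\ the flat order on $\operatorname{Flats}(M_x)$; and the greatest element $\hat 1=X$ itself corresponds to the top flat $\{1,\ldots,n\}$, with $\operatorname{rk}\hat 1 = k$ matching the rank of $M_x$ which equals $k$ because the action is effective (the weights span $\Hom(T,S^1)\otimes\Qo$). One must check that the rank function of $S(X)_{\geqslant x}$, defined via $X$ as graded poset of dimension $k$, agrees with the corank function of $M_x$, and that $S(X)_{\geqslant x}$ really is a lattice: meets and joins of face submanifolds through $x$ exist because intersecting/spanning closed coordinate subspaces stays within the class, which is precisely the lattice axiom for flats. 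Since $\operatorname{Flats}$ of a linear (indeed any) matroid is a geometric lattice of rank equal to the matroid rank, this establishes condition (3) of the definition at minimal elements $x$ — and therefore, by the identification $S(X)_{\geqslant s}=(S(X)_{\geqslant x})_{\geqslant s}$ for any $x\leqslant s$ together with the fact that an upper interval of a geometric lattice is again geometric, condition (3) holds for all $s\in S(X)$.

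Third I would assemble the global statement: gradedness of $S(X)$ (condition (1)) and existence of the top element (condition (2)) follow once we know $S(X)=\bigcup_{x\in X^T} S(X)_{\geqslant x}$, which holds because every face submanifold contains at least one fixed point (any face submanifold $N$ is itself a closed manifold with an induced torus action whose fixed-point set $N^T\subseteq X^T$ is finite and nonempty since $\#X^T>0$ and $N$ is a component of some $X^H$); the rank of an element $s\in S(X)$ is then unambiguously $k-\operatorname{rk}_{M_x}(\text{corresponding flat})$ independent of the chosen $x\leqslant s$, because different choices give intervals of different geometric lattices that are nonetheless canonically isomorphic above $s$. The main obstacle I anticipate is the well-definedness and compatibility at the \emph{global} level — namely showing the rank assignment coming from different fixed points $x, x'$ lying on the same face submanifold agree, and that a face submanifold is determined by its germ at any one of its fixed points (so that the map to flats is injective globally, not just locally). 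This is really a statement that distinct connected components of $X^H$ for varying $H$ cannot have the same germ at a point, which follows from analyticity/the slice theorem, but making it rigorous in the smooth category requires care; I would handle it by noting a face submanifold $N$ through $x$ is the closure of the connected component through $x$ of $X^{H}$ where $H=\bigcap\{\ker\alpha_{x,i}:\alpha_{x,i}|_{T_xN}=0\}$, a construction manifestly determined by the germ, and then checking this recovers $N$.
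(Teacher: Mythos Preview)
Your approach is essentially the paper's: reduce $S(X)_{\geqslant x}$ to the face poset of the tangent representation via the slice theorem, identify the latter with $\Flats(M_x)$ by matching invariant coordinate subspaces with flats, then pass to arbitrary $s$ using that upper intervals of geometric lattices are geometric and that every face contains a fixed point. One slip to fix: $S(X)$ is ordered by \emph{inclusion}, not reverse inclusion (so your later claim $\hat 1=X$ is the correct one), and the isomorphism with $\Flats(M_x)$ is rank-\emph{preserving} --- a flat $I$ of rank $r$ gives the face $\bigoplus_{i\in I}V(\alpha_{x,i})$ on which the effective torus has dimension $\dim\langle\alpha_{x,i}:i\in I\rangle=r$; your global rank-compatibility worry then dissolves, since $\rk F=\dim T/T_F$ is defined intrinsically, independent of any choice of fixed point.
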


It follows that for any face $s\in S_X$, the poset $S(X)_{>s}\setminus\{\hat{1}\}$ is shellable (according to Bj\"{o}rner~\cite{Bjorner}), and therefore its geometrical realization is homotopy equivalent to a wedge of $(k-\rk s-2)$-dimensional spheres. In particular, $S(X)_{>s}\setminus\{\hat{1}\}$ is $(k-\rk S-3)$-acyclic.

Besides Theorem~\ref{thmMainUpperAcycl}, we note that the face poset $S(X)$ of any torus action carries an additional combinatorial structure, namely the dimensions of faces. The existence of this structure puts additional restrictions on the combinatorics of face posets. In Section~\ref{secCoherentPosets} we introduce the notion of coherent locally geometric posets and prove that face posets of torus actions are coherent.

We then proceed to define the face posets for an abstract GKM-graph $\G$ which is assumed connected. A face of a GKM-graph can be defined in two possible ways: either as a GKM-subgraph or as a totally geodesic GKM-subgraph. This gives rise to two posets $S(\G)$, and $\Stg(\G)$. Unlike Theorem~\ref{thmMainUpperAcycl}, these posets are not necessarily locally geometric. However, there exists a relation between the face poset $S(X)$ of a (weakly) GKM-manifold and the posets of the corresponding GKM-graph.

\begin{thmM}\label{thmCore}
Let $X$ be a GKM-manifold. Then there exist Galois insertions
\[
S(X)\hookrightarrow S(\G(X)) \mbox{ and } S(X)\hookrightarrow \Stg(\G(X)).
\]
\end{thmM}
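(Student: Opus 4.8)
The plan is to realize each of the two claimed insertions by writing down an explicit adjoint pair of monotone maps. Let $\iota\colon S(X)\to\Stg(\G(X))$ be the map $\iota(Y)=\G(Y)$, where $\G(Y)$ is the subgraph of $\G(X)$ whose vertices are the fixed points $Y\cap X^T$ and whose edges are the $T$-invariant $2$-spheres of $X$ contained in $Y$; since a totally geodesic GKM-subgraph is in particular a GKM-subgraph, the same assignment also defines a map $\iota\colon S(X)\to S(\G(X))$. For the adjoint, set
\[
\rho(\Gamma):=\min\bigl\{\,Y\in S(X):\Gamma\subseteq\G(Y)\,\bigr\},
\]
the smallest face submanifold of $X$ whose GKM-graph contains $\Gamma$. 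I will verify that $\iota$ and $\rho$ are well defined and monotone, that $\rho\dashv\iota$ is a Galois connection, and that $\rho\circ\iota=\id_{S(X)}$; the last two facts together say exactly that $\iota$ is a Galois insertion, and they force $\iota$ to be an order embedding (apply the monotone $\rho$ to an inequality $\iota(Y_1)\leqslant\iota(Y_2)$).

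The first point to check is that $\iota$ lands where claimed, i.e. that $\G(Y)$ is a totally geodesic GKM-subgraph of $\G(X)$. A face submanifold $Y$ is a connected component of $X^H$ for some subtorus $H\leqslant T$, and one needs to know that $Y$ is itself a (weakly) GKM-manifold for the residual action of $T/H_0$ ($H_0$ the ineffective kernel), that its fixed set is $Y\cap X^T$, that its invariant $2$-spheres are exactly the invariant $2$-spheres of $X$ lying in $Y$, and that its axial function and connection are the restrictions of those of $\G(X)$. The statement about the connection is precisely what ``totally geodesic'' means: parallel transport of the isotropy representation along an invariant $2$-sphere $e\subseteq Y$ is $T$-equivariant, hence preserves the splitting of the tangent representation into its $H$-fixed summand (the directions tangent to $Y$) and the complement, so it carries the edges of $\G(Y)$ at one endpoint of $e$ onto the edges of $\G(Y)$ at the other endpoint. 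I expect this to be the main obstacle of the proof: it is the only step that genuinely uses the geometry of GKM-manifolds, and it requires care about which torus acts effectively on $Y$ and about precisely which (possibly ``weak'') GKM axioms are being assumed. Monotonicity of $\iota$ is evident, and its injectivity will follow a posteriori from $\rho\iota=\id$.

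Next, $\rho$ is well defined. The set $\{Y\in S(X):\Gamma\subseteq\G(Y)\}$ is nonempty, since it contains the greatest element $\hat 1=X$ of $S(X)$. It is closed under the operation ``replace $Y_1,Y_2$ by the connected component $C$ of $Y_1\cap Y_2$ containing $\Gamma$'': indeed $Y_1\cap Y_2$ is a union of connected components of $X^{\langle H_1,H_2\rangle}$, the connected graph $\Gamma$ sits inside one of them, and this component $C$ is again a face submanifold, with $\Gamma\subseteq\G(C)$ and $C\subseteq Y_1\cap Y_2$. As $S(X)$ is finite, this set therefore has a least element, and $\rho$ is monotone because enlarging $\Gamma$ only shrinks the set. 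The Galois connection
\[
\rho(\Gamma)\leqslant Y\quad\Longleftrightarrow\quad\Gamma\subseteq\G(Y)
\]
is now formal: ``$\Leftarrow$'' is just minimality of $\rho(\Gamma)$, while ``$\Rightarrow$'' follows from monotonicity of $\iota$ together with the inclusion $\Gamma\subseteq\G(\rho(\Gamma))$, which holds because $\rho(\Gamma)$ itself lies in the set above. This argument used only that $\Gamma$ is connected, never that it is totally geodesic, so it applies to $S(\G(X))$ and $\Stg(\G(X))$ simultaneously.

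Finally, $\rho\circ\iota=\id_{S(X)}$: for a face submanifold $Y$ one must show that $Y$ is the smallest face whose graph contains $\G(Y)$. Here Theorem~\ref{thmMainUpperAcycl} is what makes the argument go through. Fix a fixed point $x\in Y\cap X^T$. By that theorem $S(X)_{\geqslant x}$ is the lattice of flats of the linear matroid on the tangent weights $\alpha_{x,1},\dots,\alpha_{x,n}$, and under this identification a face $Y'$ through $x$ corresponds to the flat consisting of those weights that vanish on the generic stabilizer of $Y'$ --- equivalently, of the weights labelling the edges of $\G(Y')$ at $x$. Hence $\G(Y)\subseteq\G(Y')$ forces the flat of $Y$ at $x$ to be contained in the flat of $Y'$ at $x$, so $Y\leqslant Y'$ in $S(X)_{\geqslant x}$ and therefore $Y\subseteq Y'$. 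Thus $Y$ is the required minimum, $\rho\iota=\id_{S(X)}$, and the pairs $(\rho,\iota)$ provide the two Galois insertions $S(X)\hookrightarrow S(\G(X))$ and $S(X)\hookrightarrow\Stg(\G(X))$ as claimed.
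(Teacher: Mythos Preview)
Your argument is correct and arrives at the same Galois insertion as the paper, but you build the adjoint differently. The paper defines the left adjoint $\pi$ \emph{explicitly}: given a GKM-subgraph $\Hh$, one reads off the rational subspace $\Pi_{\Hh}=\langle\alpha(e)\mid e\in\str(x)\rangle$, passes to the connected subgroup $H\subset T$ with Lie algebra $\Pi_{\Hh}^{\perp}$, and sets $\pi(\Hh)$ to be the connected component of $X^{H}$ through any vertex of~$\Hh$; the inequalities $\Hh\subseteq\iota\pi(\Hh)$ and $\pi\iota(Y)=Y$ are then immediate from the local description of faces. You instead define $\rho(\Gamma)$ by a universal property (the least face whose graph contains $\Gamma$) and supply a separate existence proof via intersections of face submanifolds, then invoke Theorem~\ref{thmMainUpperAcycl} to recognise $\rho\iota=\id$. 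The paper's route is shorter and makes the adjoint concrete enough to feed into the reconstruction algorithm that follows; your route is more categorical and has the pleasant feature that the same argument handles $S(\G(X))$ and $\Stg(\G(X))$ uniformly without ever using the connection. In fact your minimum and the paper's $\pi(\Hh)$ coincide: iterating your intersection step produces exactly the component of $X^{H}$ for $H$ the connected subgroup attached to $\Pi_{\Hh}$.

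One small point you leave implicit: for $\iota$ to land in $S(\G(X))$ you need $\G(Y)$ to be \emph{connected}, since faces of a GKM-graph are connected by definition. This is where the equivariant formality hypothesis on $X$ (as opposed to mere weak GKM) is actually used: formality passes to face submanifolds and forces their GKM-graphs to be connected. The paper isolates this as the ``normality'' condition (Definition~\ref{definNormalGKM}) and notes that GKM implies normal; you should flag the same step.
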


It should be noted that the study of matroids (finite geometries) with relation to torus actions also appeared in the seminal work of Gelfand--Serganova~\cite{GelfSerg}, where the more general notion of a Coxeter matroid was introduced. However, the constructions of our paper are different from those of~\cite{GelfSerg}. The focus of~\cite{GelfSerg} is made on algebraic torus actions on homogeneous spaces, and matroids were used to describe the types of orbit closures in the Grassmann manifolds, all matroids are representable over $\Co$. On the other hand, in our paper, geometric lattices arise as the face posets of linear representations of compact tori, matroids are represantable over $\Qo$. There may be a connection between the two theories, but it is not straightforward.

The paper has the following structure. In Section~\ref{secFaces}, we strictly define the notion of a face of a torus action (there are certain issues with non-connected stabilizers where one should be careful in the definitions). We prove Theorem~\ref{thmMainUpperAcycl} in Section~\ref{secPosets} and introduce the dimension coherence condition in Section~\ref{secCoherentPosets}. Next we turn to the setting relevant to GKM theory. In Section~\ref{secGKM}, we recall the combinatorial basics of GKM-theory, as well as the notion of Galois connection.

\section{Construction of the face poset}\label{secFaces}

In this section we define the faces of an action and list their main properties. Let a torus $T=T^k$ act effectively on a topological space $X$ which is assumed connected. Let $\Cl(T)$ denote the set of all closed subgroups of $T$. For a point $x\in X$, $T_x\in \Cl(T)$ denotes the stabilizer (the stationary subgroup) of $x$, and $Tx\subset X$ is the orbit of $x$. In the following we assume that $X$ is a $T$-CW-complex (see~\cite[Def.1.1]{AdDav}). In particular, this holds for smooth torus actions on smooth manifolds. 


\begin{con}\label{conExactPartition}
For an action of $T$ on $X$ we define the \emph{fine subdivision} of $X$ by stabilizer types:
\[
X=\bigsqcup_{H\in \Cl(T)}X^{(H)}
\]
where $X^{(H)}=\tilde{\lambda}^{-1}(H)=\{x\in T\mid T_x=H\}$. Moreover, for $H\in \Cl(T)$ define
\[
X^H=\bigsqcup_{\tilde{H}\supseteq H}X^{(\tilde{H})}=\left\{x\in X\mid hx=x\,\,\forall h\in H\right\}.
\]
Therefore $X^H$ is the set of $H$-fixed points of $X$. 
\end{con}

\begin{con}\label{conFiltrationOrbitType}
For a $T$-action on a topological space $X$ consider the filtration
\begin{equation}\label{eqEquivFiltrGeneralX}
X_0\subset X_1\subset\cdots \subset X_k
\end{equation}
where $X_i$ is the union of all $\leqslant i$-dimensional orbits of the action. In other words,
\[
X_i=\{x\in X\mid \dim T_x\geqslant k-i\}=\bigsqcup_{H\in \Cl(T),\dim H\geqslant k-i}X^{(H)}
\]
according to the natural homeomorphism $Tx\cong T/T_x$. Filtration~\eqref{eqEquivFiltrGeneralX} is called the orbit type filtration, and $X_i$ the equivariant $i$-skeleton of $X$. Each $X_i$ is $T$-stable. Orbit type filtration induces the following filtration of the orbit space $Q=X/T$:
\begin{equation}\label{eqEquivFiltrGeneralQ}
Q_0\subset Q_1\subset\cdots \subset Q_k,\quad\mbox{where }Q_i=X_i/T
\end{equation}
\end{con}

Recall that, for a (compact) torus $T$, the lattice $N=\Hom(T,S^1)\cong \Zo^k$ is called the weight lattice, and its dual lattice $N^*=\Hom(S^1,T)$ is called the lattice of 1-dimensional subgroups.

\begin{con}\label{conTangentAndRankDim}
Let $X$ be a smooth closed connected orientable manifold, and let $T$ act smoothly and effectively on $X$. If $x\in X^T$ is a fixed point, we have an induced representation of $T$ in the tangent space $T_xX$ called the tangent representation. Let $\alpha_{x,1},\ldots,\alpha_{x,n}\in \Hom(T^k,S^1)\cong \Zo^{k}$ be the weights of the tangent representation at $x$, which means  that
\[
T_xX\cong V(\alpha_{x,1})\oplus\cdots\oplus V(\alpha_{x,n})\oplus \Ro^{\dim X-2n}
\]
where $V(\alpha)$ is the standard 1-dimensional complex representation given by $tz=\alpha(t)\cdot z$, $z\in \Co$, and the action on $\Ro^{\dim X-2n}$ is trivial (see~\cite[Cor.I.2.1]{Hsiang}). It is assumed that all weight vectors $\alpha_{x,i}$ are nonzero since otherwise the corresponding summands contribute to $\Ro^{\dim X-2n}$. If there is no $T$-invariant complex structure on $X$, then there is an ambiguity in the choice of signs of vectors $\alpha_i$. However, for the statements of this paper the choice of signs is nonessential. We can also assume that the weight vectors $\alpha_{x,1},\ldots,\alpha_{x,n}$ linearly span the weight lattice $\Hom(T^k,S^1)$ (otherwise there would exist an element $\lambda$ of the dual lattice $\Hom(S^1,T^k)$ such that $\langle\alpha_{x,1},\lambda\rangle=0$, which implies that the corresponding 1-dimensional subgroup $\lambda$ lies in the noneffective kernel). This observation implies that, if the action has fixed points, there holds $\dim X\geqslant 2n\geqslant 2k$.
\end{con}

\begin{defin}\label{definComplexity}
Let $T$ act effectively on a smooth manifold $X$, and the fixed point set $X^T$ is finite and nonempty. The nonnegative integer $\com X=\frac{1}{2}\dim X-\dim T$ is called \emph{the complexity of the action}.
\end{defin}

If the action is noneffective, the symbol $\com X$ denotes the complexity of the corresponding effective action, i.e. the action of the quotient by the noneffective kernel.

\begin{prop}[{Consequence of slice theorem, see e.g.~\cite[Ch.II, Thm.4.2 and Ch.IV]{Bred}}]\label{propSliceThm}
Each fixed point $x\in X^T$ has a neighborhood equivariantly diffeomorphic to the tangent representation $T_xX$.
\end{prop}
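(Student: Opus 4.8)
The plan is to treat this as the special case of the slice theorem in which the orbit is a single point, where it can be proved directly using an invariant Riemannian metric together with naturality of the exponential map. First, average an arbitrary Riemannian metric on $X$ over the Haar measure of the compact torus $T$ to obtain a $T$-invariant metric $g$; then each $t\in T$ acts on $X$ as an isometry $\phi_t$. Fix a fixed point $x\in X^T$. Since every $\phi_t$ fixes $x$, naturality of the exponential map under isometries gives $\phi_t\circ\exp_x=\exp_x\circ(d\phi_t)_x$ wherever $\exp_x$ is defined. The family $\{(d\phi_t)_x\}_{t\in T}$ is, by definition, the tangent representation of $T$ on $T_xX$, so $\exp_x$ is $T$-equivariant with respect to the linear action on $T_xX$ and the given action on $X$.

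Next, since $d(\exp_x)_0=\id$, the map $\exp_x$ is a diffeomorphism from some open ball $B_\varepsilon(0)\subset T_xX$ onto an open neighborhood $U$ of $x$ in $X$. The tangent representation is orthogonal with respect to $g_x$, so $B_\varepsilon(0)$ is $T$-invariant; hence $U$ is a $T$-invariant neighborhood of $x$, and $\exp_x|_{B_\varepsilon(0)}\colon B_\varepsilon(0)\to U$ is an equivariant diffeomorphism. Finally, any open ball about the origin of a Euclidean space is equivariantly diffeomorphic to the whole space under an orthogonal action, for instance via the radial rescaling $v\mapsto \varepsilon v/\sqrt{\varepsilon^2-\|v\|^2}$, which commutes with orthogonal linear maps. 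Composing the three diffeomorphisms exhibits $U$ as equivariantly diffeomorphic to $T_xX$ with its tangent representation.

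The only step that requires genuine care is the equivariance of $\exp_x$, which rests on the $T$-invariance of $g$ and the isometry-naturality of $\exp$; everything else is routine. Alternatively, one may simply quote Bredon's slice theorem: at a fixed point the slice representation is all of $T_xX$ and the associated bundle $T\times_T T_xX$ is $T_xX$ itself, so an invariant tubular neighborhood of $x$ is equivariantly diffeomorphic to $T_xX$, as claimed.
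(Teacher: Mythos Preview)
Your argument is correct and is the standard direct proof of this special case of the slice theorem: average to an invariant metric, use isometry-naturality of the exponential map to obtain equivariance, and rescale the ball to the whole tangent space. The paper itself does not supply a proof of this proposition; it is stated as a black-box consequence of the slice theorem with a reference to Bredon~\cite[Ch.~II, Thm.~4.2 and Ch.~IV]{Bred}. So there is nothing to compare against beyond noting that you have unpacked what the paper merely cites, and your alternative final paragraph (quoting the slice theorem at a fixed point, where the slice is all of $T_xX$) is exactly the one-line justification the paper has in mind.
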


In particular, $x\in X^T$ is isolated if and only if $\dim X=2n$ and all tangent weights $\alpha_{x,1},\ldots,\alpha_{x,n}$ are nonzero.

For each closed subgroup $H\subset T$, the subset $X^H$ is a closed smooth submanifold in $X$ (this is again a consequence of general slice theorem, see~\cite[Ch.IV]{Bred}). This submanifold is stable under $T$ as follows from commutativity of $T$.

\begin{defin}\label{definInvarSubmfds}
A connected component of $X^H$ is called an \emph{invariant submanifold}.
\end{defin}

For a smooth $T$-action on $X$ consider the canonical projection $p\colon X\to Q$ to the orbit space, and the filtration~\eqref{eqEquivFiltrGeneralQ} of the orbit space.

\begin{defin}\label{definFacesOrbitSpaceGeneral}
The closure of a connected component of $Q_i\setminus Q_{i-1}$ is called a \emph{face} $F$ if it contains at least one fixed point. The number $i$ is called the rank of a face $F$, it is equal to the dimension of a generic $T$-orbit in $F$.
\end{defin}

%

\begin{lem}\label{lemFaceSubmanifold}
For a $T$-action on $X$, the full preimage $X_F=p^{-1}(F)$ of any face $F\subset Q$ is a smooth submanifold in $X$.
\end{lem}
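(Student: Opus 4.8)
The plan is to exhibit $X_{F}$ as a connected component of $X^{H}$ for a suitable closed subgroup $H\subset T$; since $X^{H}$ is a closed smooth submanifold of $X$ (as recalled above, via the slice theorem), each of its components is one too.

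Write $F=\overline{F^{\circ}}$, where $F^{\circ}$ is the connected component of $Q_i\setminus Q_{i-1}$ whose closure is $F$, with $i=\rk F$. First I would record that $X_{F}=p^{-1}(\overline{F^{\circ}})=\overline{p^{-1}(F^{\circ})}$: the orbit map $p$ is open, so if $V$ is an open set meeting $p^{-1}(\overline{F^{\circ}})$ then the open set $p(V)$ meets $\overline{F^{\circ}}$, hence meets $F^{\circ}$, hence $V$ meets $p^{-1}(F^{\circ})$; the inclusion $\overline{p^{-1}(F^{\circ})}\subseteq p^{-1}(\overline{F^{\circ}})$ is automatic. Next, $p^{-1}(F^{\circ})$ is connected, being the preimage of the connected set $F^{\circ}$ under the quotient map $p^{-1}(F^{\circ})\to F^{\circ}$, whose fibres are the orbits $Tx\cong T/T_{x}$, connected since $T$ is. On $p^{-1}(F^{\circ})$ the stabilizer has constant dimension $k-i$, and by the slice theorem the stabilizer of a point near $Tx$ is a subgroup of $T_{x}$ (here we use that $T$ is abelian); a closed subgroup of the torus $T_{x}$ of the same dimension $k-i$ has identity component $(T_{x})_{0}$, so $x\mapsto (T_{x})_{0}$ is locally constant on $p^{-1}(F^{\circ})$ and therefore equals a fixed subtorus $H\subset T$, $\dim H=k-i$. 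Thus $p^{-1}(F^{\circ})\subseteq X^{H}$; let $Y$ be the connected component of $X^{H}$ that contains it.

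It remains to show $Y=X_{F}$. Since $T$ is connected it preserves $Y$, and $H$ acts trivially on $Y$, so the $i$-dimensional torus $G:=T/H$ acts on the connected manifold $Y$. Put $Y^{\circ}=\{y\in Y:\dim G_{y}=0\}$; this set is open, by upper semicontinuity of $y\mapsto\dim G_{y}$, and it contains $p^{-1}(F^{\circ})$, because for $y$ there one has $\dim G_{y}=\dim T_{y}-\dim H=0$. By the principal orbit theorem (see~\cite[Ch.\ IV]{Bred}), the union $Y_{\mathrm{pr}}$ of principal $G$-orbits is open, dense and connected in $Y$. If the principal isotropy groups were positive-dimensional, then $Y_{\mathrm{pr}}$ would be disjoint from $Y^{\circ}$, which is impossible since $Y^{\circ}$ is open and nonempty while $Y_{\mathrm{pr}}$ is dense; hence $Y_{\mathrm{pr}}\subseteq Y^{\circ}$, so $Y^{\circ}$ contains a connected dense subset of $Y$ and is therefore itself connected and dense in $Y$. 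For $y\in Y^{\circ}$ we have $\dim T_{y}=\dim H+\dim G_{y}=k-i$, so $p(Y^{\circ})\subseteq Q_{i}\setminus Q_{i-1}$; being connected and meeting $F^{\circ}$, it is contained in $F^{\circ}$, i.e.\ $Y^{\circ}\subseteq p^{-1}(F^{\circ})$. Together with the inclusion already noted this gives $Y^{\circ}=p^{-1}(F^{\circ})$, whence
\[
X_{F}=\overline{p^{-1}(F^{\circ})}=\overline{Y^{\circ}}=Y,
\]
the last equality because $Y$ is closed in $X$ (a component of the closed submanifold $X^{H}$) and $Y^{\circ}$ is dense in the connected manifold $Y$. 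So $X_{F}$ is a connected component of $X^{H}$, hence a smooth submanifold of $X$.

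The transformation-group ingredients here — the slice theorem (used both for the local form near a point and for the fact that $X^{H}$ is a submanifold) and the principal orbit theorem — are standard. The point that requires care is the matching of the combinatorially defined face $F$ with the geometric object $Y$: that $p^{-1}(F^{\circ})$ lies in a single component $Y$ of $X^{H}$, that $Y^{\circ}$ is \emph{exactly} $p^{-1}(F^{\circ})$ and not a larger orbit-type locus spreading over several components of $Q_{i}\setminus Q_{i-1}$, and the passage between closures taken in $X$ and in $Q$ via the openness of $p$.
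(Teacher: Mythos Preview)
Your proof is correct and follows the same strategy as the paper's: show that $X_F$ is a connected component of $X^{H}$ for an appropriate closed subgroup $H\subset T$, using the principal orbit theorem to identify the generic stratum of that component with $p^{-1}(F^{\circ})$. You are more explicit than the paper in several places---defining $H$ as the common identity component of stabilizers on $p^{-1}(F^{\circ})$ rather than as the noneffective kernel of $T$ on $X_F$, and justifying $X_F=\overline{p^{-1}(F^{\circ})}$ via openness of $p$---but the underlying argument is essentially the same.
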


\begin{proof}
Let $F$ be the closure of a connected component $F^{\circ}$ of $Q_i\setminus Q_{i-1}$. Let $H$ be the noneffective kernel of the $T$-action on $X_F$. This implies $X_F$ lies inside the closed submanifold $X^H$. According to the principal orbit theorem \cite[Thm.3.1]{Bred}, there exists a subgroup $H'$, $\dim H'=\dim H$, such that the subset $U\subset X_F$ of orbits with stabilizer $H'$ is open, dense, and connected in a connected component of the manifold $X^H$. Therefore the set $U/T\subset Q_i\setminus Q_{i-1}$ is connected and contains $F^{\circ}$. This implies that the closure of $U/T$ coincides with $F$, and we have that $X_F$ is a connected component of $X^H$, hence a smooth submanifold.
\end{proof}

\begin{defin}\label{definFaceSubmfd}
Let $F$ be a face of $Q=X/T$. The submanifold $X_F=p^{-1}(F)\subset X$ is called a \emph{face submanifold} corresponding to $F$.
\end{defin}

By the definition of a face, each face contains at least one fixed point of the $T$-action on $X$. On the other hand, there exist sufficiently many faces in the vicinity of each fixed point, as the following lemma shows.

\begin{lem}\label{lemFaceNearFixedPoint}
Let $y$ be a point in a small neighborhood of $x\in X^T$ (or, equivalently, $y\in T_xX$). Let $H$ be the connected component of the stabilizer $T_y$, containing $1$. Then the connected component $Y$ of $X^H$ containing $x$ is a face submanifold.
\end{lem}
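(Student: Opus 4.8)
The plan is to identify the connected component $Y$ of $X^H$ through $x$ with a face submanifold $X_F$ by producing the face $F$ explicitly. First I would apply the slice theorem (Proposition~\ref{propSliceThm}) to reduce everything to the tangent representation: a neighborhood of $x$ is equivariantly diffeomorphic to $T_xX\cong V(\alpha_{x,1})\oplus\cdots\oplus V(\alpha_{x,n})$, so $y$ corresponds to a vector $(z_1,\ldots,z_n)$. The stabilizer $T_y$ is then the intersection $\bigcap_{i\,:\,z_i\neq 0}\Ker\alpha_{x,i}$, and its identity component $H$ is the identity component of that intersection. Set $I=\{i\,:\,z_i\neq0\}$; then $H$ is the codimension-$(\rk\{\alpha_{x,i}\}_{i\in I})$ subtorus, and near $x$ the fixed set $X^H$ is the coordinate subspace $\bigoplus_{j\,:\,\alpha_{x,j}\in\langle\alpha_{x,i}:i\in I\rangle}V(\alpha_{x,j})$ — i.e. the sum over the flat of the linear matroid generated by $I$ (this is exactly the local picture behind Theorem~\ref{thmMainUpperAcycl}). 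In particular $Y$ is a connected component of $X^H$ that contains the fixed point $x$, so by Lemma~\ref{lemFaceSubmanifold}'s proof $Y=X_F$ for $F$ the closure of the component of $Q_i\setminus Q_{i-1}$ traced out by the principal orbits inside $Y$, where $i=\dim T-\dim H$.

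The one thing that genuinely needs an argument is that this $F$ is a \emph{face} in the sense of Definition~\ref{definFacesOrbitSpaceGeneral}, i.e. that it is the closure of a single connected component of $Q_i\setminus Q_{i-1}$ and that it contains a fixed point. The fixed-point condition is immediate since $x\in Y$ and $p(x)\in F$. For the component-closure condition I would argue exactly as in Lemma~\ref{lemFaceSubmanifold}: $H$ is the noneffective kernel of the $T$-action on $Y$ (by construction of $H$ as the identity component of a generic stabilizer in the coordinate subspace, and here one must note the principal orbit type on $Y$ has stabilizer with identity component exactly $H$, since the $\alpha_{x,j}$ with $j$ outside $I$ that got added to the flat are linear combinations of those in $I$ and hence vanish on $H$ but do not all vanish on any strictly larger subtorus on an open dense subset of $Y$). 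Then the principal orbit theorem \cite[Thm.3.1]{Bred} gives an open dense connected set $U\subset Y$ of orbits all of the same type, $U/T$ is connected and sits inside $Q_i\setminus Q_{i-1}$, and its closure is a single face $F$ with $X_F=Y$.

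The main obstacle I anticipate is the bookkeeping around non-connected stabilizers: $T_y$ itself need not be connected, $H$ is only its identity component, and one must be careful that the ``effective complexity $0$ direction'' (the kernel on $Y$) is genuinely $H$ and not something larger with the same dimension. This is the same subtlety the authors flag in the introduction and handle in Lemma~\ref{lemFaceSubmanifold} via the principal orbit theorem (which only pins down a dimension-$\dim H$ subgroup $H'$, not $H'=H$). So the cleanest route is probably: first prove the local/tangent claim that $X^H$ near $x$ is the matroid-flat coordinate subspace with principal stabilizer-component $H$, then quote Lemma~\ref{lemFaceSubmanifold} essentially verbatim to package $Y$ as a face submanifold. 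I do not expect any hard analysis — it is all slice theorem plus the matroid-flat identification plus the principal orbit theorem — but the definitional care with disconnected stabilizers is where a sloppy proof would go wrong.
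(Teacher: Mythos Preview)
Your proposal is correct and rests on the same two pillars as the paper's proof: the slice theorem and the principal orbit theorem, with the connectedness of $H$ supplying the key inclusion $H\subseteq H'$ (where $H'$ is the principal stabilizer). The tactical difference is in how the face $F$ is produced. You build $F$ from $Y$: apply the principal orbit theorem to $Y$, take the closure of $U/T$ inside $Q_i\setminus Q_{i-1}$, and then check that this closure contains the fixed point $x$. The paper instead builds $F$ from the point $y$ directly: $p(y)$ already lies in some component of $Q_i\setminus Q_{i-1}$, its closure $F$ contains $x$ because $y$ is near $x$, so $F$ is a face for free; only afterwards is the principal orbit theorem invoked to match $X_F$ with $Y$. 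The paper's order is shorter because it avoids your explicit matroid--flat description of $(T_xX)^H$, which is correct but really belongs to Proposition~\ref{propLocalLattice} rather than here. One small slip worth flagging: you assert that $H$ is the noneffective kernel of the $T$-action on $Y$, but as you yourself note later, the kernel may be a disconnected group $K\supseteq H$ with $\dim K=\dim H$; only the dimension equality is needed (and true), and that is all either argument uses.
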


\begin{proof}
First notice that $X^T\subset X^H$, so $Y$ is a nonempty closed smooth submanifold. If $i=\dim Ty$, then $p(y)\in X_i\setminus X_{i-1}$. The orbit $p(y)$ lies in some connected component $F$ of $Q_i\setminus Q_{i-1}$. Since $y$ is close to $x$, $F$ contains $x$, so $F$ is a face, and $X_F$ is a face submanifold. Similarly to the proof of Lemma~\ref{lemFaceSubmanifold}, we apply principal orbit theorem: there exists a connected open dense subset $U$ of both $X_F$ and some component of $X^{H'}$, and all orbits of $U$ have the same stabilizer $H'$, $\dim H'=\dim H$. Since $H$ is connected, there is an inclusion $H\subseteq H'$, so we have $X^{H'}\subseteq X^H$. Therefore, $U$ is an open subset in both manifolds $X_F$ and $Y$, so the latter two coincide.
\end{proof}

\begin{ex}\label{exNonFaceFace}
Here we provide an example of an action, which has invariant submanifolds that are not face submanifolds. Let us consider locally standard actions of $T^n$ on $X^{2n}$. Locally standard means that the orbit space is a nice $n$-manifold with corners $Q^n=X^{2n}/T^n$ with the projection $p\colon X^{2n}\to Q^n$. In this setting, each face $F$ of $Q$ (face of a mfd. with corners) gives an invariant submanifold $p^{-1}(F)$. However, $F$ is a face in the sense of Definition~\ref{definFacesOrbitSpaceGeneral} if and only if it contains a vertex of $Q$.

There certainly exist manifolds with corners with a face that does not contain a vertex. An example is shown on Fig.~\ref{figEye}: the inner circle is a face of a manifold with corners, but it does not contain a fixed point of the torus action, so it is not a face in the sense of Definition~\ref{definFacesOrbitSpaceGeneral}. There is a way to construct a locally standard action on a 4-manifold, whose orbit space is the one shown on the figure. Consider two manifolds:
\begin{itemize}
  \item $X_1=S^3\times S^1=\{(z_1,z_2,z_3)\in\Co^3\mid |z_1|^2+|z_2|^2=1, |z_3|=1\}$ with the locally standard action of $T^2$ given by $(t_1,t_2)(z_1,z_2,z_3)=(z_1,t_1z_2,t_2z_3)$. The orbit space of this action is homeomorphic to $D^2$.
  \item $X_2=S^4$ with the canonical action of $T^2$ (the one-point compactification of the standard action of $T^2$ on $\Co^2$). The orbit space is a biangle.
\end{itemize}
The connected sum $X=X_1\hash_{T^2}X_2$ of these manifolds along a 2-dimensional orbit is a smooth 4-manifold with a locally standard action, and its orbit space is the connected sum of $X_1/T^2$ and $X_2/T^2$ at an interior point, which is diffeomorphic to the manifold with corners shown on Fig.~\ref{figEye}.
\end{ex}

\begin{figure}[h]
\begin{center}
\includegraphics[scale=0.4]{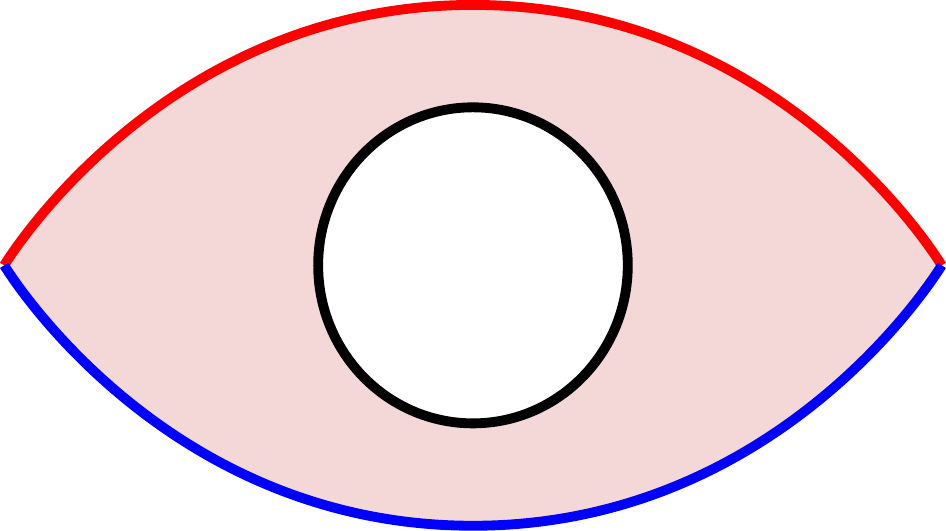}
\end{center}
\caption{The interior circle in this manifold with corners is not considered a face according to Definition~\ref{definFacesOrbitSpaceGeneral}.}\label{figEye}
\end{figure}

\begin{con}\label{conPosetOfFaces}
It is known~\cite[Thm.5.11]{Dieck} that a smooth action of $T$ on a compact smooth manifold has only finite number of stabilizers. Therefore, there is only finite number of invariant (hence, face) submanifolds of $X$. The finite set of faces of $Q$ (equiv., face submanifolds of $X$) is partially ordered by inclusion. This poset is denoted by $S(X)$.
\end{con}

\begin{con}\label{conCommonStabilizer}
Let us introduce additional notation. Let $F$ be a face and $X_F$ the corresponding face submanifold. The action of $T$ on $X_F$ may have a noneffective kernel
\[
T_F=\{t\in T\mid tx=x\,\,\forall x\in X_F\},
\]
which we also call a common stabilizer of points from $F$. The number $\dim T/T_F$ is called the rank of $F$ and is denoted $\rk F$. Ranks define the grading on the poset $S(X)$ (this will become more transparent in the proof of Lemma~\ref{lemUpperToLocal} below).

The effective action of $T/T_F$ on $X_F$ satisfies the general assumption from Definition~\ref{definComplexity}: its fixed point set is nonempty (due to the definition of a face) and finite (because so is $X^T$). Therefore, the induced complexity $\com X_F$ is well defined: $\com X_F = \dim X_F - \dim(T/T_F) = \dim X_F - \rk F$.
\end{con}

Notice that each face $F$ of rank $r$ is a subset of the filtration term $Q_r$. It is tempting to say that $Q_r$ is the union of all faces of rank $r$. However, this may be false in general, since faces are required to contain a fixed point by the definition. Some invariant submanifolds of rank $r$ contributing to $Q_r$ may fail to be face submanifolds.

\section{Local structure of face posets}\label{secPosets}

\subsection{Local structure of $S(X)$}

Recall that $S(X)$ denotes the poset of face submanifolds in $X$ (or the poset of faces of $Q$) ordered by inclusion.

The symbol $|S|$ denotes the geometrical realization of a poset $S$ that is the geometrical realization of the order complex $\ord S$ (the simplices of $\ord S$ are all chains in $S$).

Let $x\in X^T$ be a fixed point and $V=T_xX$ be the tangent representation. As was noted earlier, we have a decomposition into irreducible representations $V=V(\alpha_{x,1})\oplus\ldots\oplus V(\alpha_{x,n})$, where $\alpha = \{\alpha_{x,1},\ldots,\alpha_{x,n}\} \subset \Hom(T^k,T^1)\cong \Zo^k$ is the set of tangent weights. Notice that the lattice $\Hom(T^k,T^1)$ can be naturally embedded into the dual Lie algebra $\ttt^*\cong \Ro^k$ of the torus $T=T^k$.

The next lemma explains that the local structure of the poset $S(X)$ near a fixed point $x$ is given by a linearization of the torus action at that point. We slightly abuse the notation by not distinguishing a fixed point $x$ from the set $\{x\}$ considered as an element of $S(X)$: we simply write $x$ in both cases.

\begin{lem}\label{lemUpperToLocal}
For a fixed point $x\in X^T$, the subposet $S(X)_{\geqslant x}$ is isomorphic to the face poset $S(V)$ of the $T^k$-action on $V=T_xX\cong \Ro^{2n}$.
\end{lem}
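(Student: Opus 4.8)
The plan is to use the slice theorem to make the statement entirely local. By Proposition~\ref{propSliceThm} there is a $T$-invariant open neighborhood $U$ of $x$ and a $T$-equivariant diffeomorphism $\phi\colon V=T_xX\to U$ with $\phi(0)=x$. For every closed subgroup $H\subseteq T$ this restricts to an equivariant diffeomorphism $V^H\xrightarrow{\ \cong\ }X^H\cap U$, and since $V^H$ is a linear subspace (the sum of the trivial summands of $V$) it is connected; consequently $\bigl(\operatorname{comp}_x(X^H)\bigr)\cap U=\phi(V^H)$, where $\operatorname{comp}_x$ denotes the connected component through $x$. Two preliminary observations then set up a bijection. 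First, by Lemma~\ref{lemFaceSubmanifold} every $Y\in S(X)_{\geqslant x}$ equals $\operatorname{comp}_x\bigl(X^{T_Y}\bigr)$, where $T_Y$ is the common stabilizer of $Y$ from Construction~\ref{conCommonStabilizer}; dually, by the same argument applied inside $V$, every $Z\in S(V)$ is a linear subspace equal to $V^{T_Z}$, $T_Z$ its common stabilizer. Second, $\phi$ identifies $V^{T_Y}$ equivariantly with $Y\cap U$, which is an open dense subset of the connected manifold $Y$; hence the common stabilizer of $V^{T_Y}$ equals that of $Y\cap U$, which equals that of $Y$ (a closed subgroup fixing a dense subset fixes everything), namely $T_Y$. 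In other words, the subgroups $T_Y$ arising this way are automatically ``saturated'', $T_Y=\{t\in T\mid t\text{ fixes }V^{T_Y}\}$.

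Next I would define $\Phi\colon S(V)\to S(X)_{\geqslant x}$ by $\Phi(Z)=\operatorname{comp}_x\bigl(X^{T_Z}\bigr)$, which manifestly contains $x$. To see $\Phi(Z)$ is genuinely a face submanifold I invoke Lemma~\ref{lemFaceNearFixedPoint}: choosing a point $v_0\in Z=V^{T_Z}$ all of whose coordinates in the weight decomposition are nonzero, effectivity of the $T/T_Z$-action on $Z$ forces the identity component of its stabilizer to be exactly $T_Z$ (equivalently, apply the principal orbit theorem to $Z$); then $y_0=\phi(v_0)$ is a point near $x$ with $T_{y_0}^{\,0}=T_Z$, so Lemma~\ref{lemFaceNearFixedPoint} shows $\operatorname{comp}_x(X^{T_Z})$ is a face submanifold. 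I then claim $\Phi$ is a bijection with inverse $\Psi(Y)=V^{T_Y}$. Indeed $\Psi$ lands in $S(V)$ by the same reasoning run inside $V$ (applying Lemma~\ref{lemFaceNearFixedPoint} to the linear action on $V$), and the identities $\Psi\Phi=\operatorname{id}$, $\Phi\Psi=\operatorname{id}$ follow by intersecting with $U$ and applying $\phi^{-1}$, using $\bigl(\operatorname{comp}_x(X^K)\bigr)\cap U=\phi(V^K)$ together with the saturation of common stabilizers established above (this pins down that the common stabilizer of $\operatorname{comp}_x(X^{T_Z})$ is again $T_Z$, and vice versa). Finally both maps preserve inclusion: $Y_1\subseteq Y_2$ forces $T_{Y_1}\supseteq T_{Y_2}$, hence $V^{T_{Y_1}}\subseteq V^{T_{Y_2}}$; symmetrically $Z_1\subseteq Z_2$ forces $X^{T_{Z_1}}\subseteq X^{T_{Z_2}}$, hence $\operatorname{comp}_x(X^{T_{Z_1}})\subseteq\operatorname{comp}_x(X^{T_{Z_2}})$. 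This exhibits $\Phi$ as a poset isomorphism $S(V)\xrightarrow{\ \cong\ }S(X)_{\geqslant x}$, sending $0\mapsto x$ and $V\mapsto X$, so that minima, maxima and the grading by rank match on both sides.

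The step I expect to be the main obstacle is the bookkeeping with connected components. A face submanifold $Y$ of $X$ is only a connected component of $X^{T_Y}$, and $X^{T_Y}$ may have several components and may be globally much larger than its local model $V^{T_Y}$ near $x$; one must be careful that passing to $U$ extracts exactly one ``sheet'' and that this sheet is precisely the face submanifold of $V$ attached to the same subgroup. The decisive input making this go through is Lemma~\ref{lemFaceNearFixedPoint}, which is exactly the assertion that the component through $x$ of $X^H$ is a face submanifold when $H$ is realized as the identity component of the stabilizer of a nearby point; combined with the principal orbit theorem it guarantees that the common stabilizer of a face submanifold is attained by generic points of it, which is what rigidifies the correspondence. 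A minor point to dispatch along the way is that $V$ is non-compact, so in applying Lemma~\ref{lemFaceNearFixedPoint} and the principal orbit theorem to the linear action on $V$ one should note their proofs are local near the fixed point and do not require compactness (alternatively, the direction $S(X)_{\geqslant x}\to S(V)$ can be phrased so as to stay inside $X$ throughout). One should also record explicitly that $x$ is the least element of $S(X)_{\geqslant x}$ and $0$ the least element of $S(V)$ — every face submanifold of $V$ contains its unique fixed point $0$ — so that the isomorphism is automatically rank-preserving.
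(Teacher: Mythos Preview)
Your proof is correct and follows essentially the same route as the paper: both arguments use the slice theorem (Proposition~\ref{propSliceThm}) to identify the local picture, and both invoke Lemma~\ref{lemFaceNearFixedPoint} to certify that the map $S(V)\to S(X)_{\geqslant x}$ actually lands in face submanifolds. The paper's maps $\pi$ and $\rho$ are your $\Psi$ and $\Phi$, except that the paper phrases them via ``some connected subgroup $H$ with $Z=V^H$'' rather than via the common stabilizer $T_Z$; your version is more explicit about the connected-component bookkeeping that the paper leaves to the reader. One small imprecision: when you write that the identity component of the stabilizer of a generic $v_0\in Z$ is ``exactly $T_Z$'', you mean $(T_Z)^0$, since $T_Z$ need not be connected; this is harmless because $V^{(T_Z)^0}=V^{T_Z}=Z$ (as $Z=V^H$ for a connected $H\subseteq (T_Z)^0$), so $\operatorname{comp}_x(X^{T_Z})=\operatorname{comp}_x(X^{(T_Z)^0})$ and Lemma~\ref{lemFaceNearFixedPoint} applies as you intend.
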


\begin{proof}
Here we essentially use slice theorem (Proposition~\ref{propSliceThm}) and Lemma~\ref{lemFaceNearFixedPoint}. Define a map of posets $\pi\colon S(X)_{\geqslant x}\to S(V)$ as follows. Let $Y\in S(X)$ be a face submanifold such that $x\in Y$, and $Y$ is a connected component of $X^H$ for some connected subgroup $H\in \Cl(T)$. The tangent space $T_xY$ coincides with $(T_xX)^H$. We set $\pi(Y)=T_xY$, this is an element of $S(V)$.

Similarly, a map $\rho\colon S(V)\to S(X)_{\geqslant x}$ can be defined. For $Z\in S(V)$ we pick up a connected subgroup $H\in\Cl(T)$ such that $Z=V^H$ and set $\rho(Z)$ to be the component of $X^H$ containing $x$.

The maps $\pi$ and $\rho$ are inverses of each other according to the slice theorem.
\end{proof}

In order to prove Theorem~\ref{thmMainUpperAcycl}, it only remains to prove that $S(V)$ is a geometric lattice for any effective representation of $T$ on $V\cong\Ro^{2n}$.

\subsection{Linear matroids and geometric lattices}

We recall several basic constructions from matroid theory.

\begin{con}\label{conLinearMatroids}
A collection $\beta=(\beta_1,\ldots,\beta_m)$ of vectors in a vector space $W$ (over any field), is called a linear matroid. An independence complex of a matroid $\beta$ is the simplicial complex
\[
\IC(\beta)=\{\{i_1,\ldots,i_k\}\subset[m]\mid \beta_{i_1},\ldots,\beta_{i_k} \mbox{ are linearly independent}\}.
\]
A flat is an intersection of $\beta$ with any vector subspace $\Pi\subset W$. More precisely, a flat is any subset $A\subseteq[m]$ for which there exists a vector subspace $\Pi\subseteq W$ such that
\[
A=\{i\in [m]\mid \beta_i\in \Pi\}.
\]
The flats are naturally ordered by inclusion, and the poset of flats is denoted $\Flats(\beta)$. Each flat comes equipped with its rank: the rank of a flat $A$ is given by the dimension of the linear span $\dim \langle\beta_i\mid i\in A\rangle$. This makes  $\Flats(\beta)$ a graded poset. Notice that $\Flats(\beta)$ has the greatest element $\hat{1}$ (given by $[m]$ itself) and the least element $\hat{0}$ (given by the set of indices of zero vectors in the collection $\beta$). The rank of a matroid $\beta$ is by definition the dimension of the linear span of all $\beta_i's$.
\end{con}

\begin{defin}
The poset $\Flats(\beta)$ is called \emph{the geometric lattice} (or the lattice of flats) of the linear matroid $\beta$.
\end{defin}

There exists a more general abstract notion of a matroid and its geometric lattice, which we do not need here, see details in~\cite{Oxley}. We recall the classical results of Bj\"{o}rner.

\begin{prop}[{\cite[Thms.7.3.3, 7.6.3, 7.8.1, 7.9.1]{Bjorner}}]\label{propMatroidsAcycl}
(1) The simplicial complex $\IC(\beta)$ is a shellable complex, hence it is Cohen--Macaulay. The geometrical realization $|\IC(\beta)|$ is homotopy equivalent to a wedge of $(\rk\beta-1)$-dimensional spheres. The number of spheres in the wedge is given by the top $h$-number $h_{\rk\beta-1}(\IC(\beta))$.

(2) The poset $\Flats(\beta)\setminus\{\hat{0},\hat{1}\}$ is shellable. The geometrical realization $|\Flats(\beta)\setminus\{\hat{0},\hat{1}\}|$ is homotopy equivalent to a wedge of $(\rk\beta-2)$-dimensional spheres. The number of spheres in the wedge is given by the M\"{o}bius function $\mu_{\Flats(\beta)}(\hat{0},\hat{1})$ of the poset $\Flats(\beta)$.
\end{prop}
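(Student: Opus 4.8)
The plan is to derive all four assertions from shellability, which is the reason the two parts are grouped together. For part (1) I would produce an explicit shelling of the independence complex $\IC(\beta)$. Fix a linear order $1<2<\cdots<m$ on the ground set and put $d=\rk\beta$; the facets of $\IC(\beta)$ are exactly the bases of the linear matroid, all of size $d$, so $\IC(\beta)$ is pure of dimension $d-1$. Order the facets by the lexicographic order on their increasing $d$-tuples. Using the basis--exchange axiom one checks that this is a shelling order: for any two bases $B\prec B'$ there is a basis $B''\prec B'$ with $|B'\setminus B''|=1$ and $B\cap B'\subseteq B''$. Once $\IC(\beta)$ is known to be a pure shellable $(d-1)$-complex, the homotopy-theoretic consequences are formal and may be quoted from~\cite{Bjorner}: pure shellable complexes are Cohen--Macaulay, and they are homotopy equivalent to a wedge of spheres of the top dimension $d-1$, the number of spheres being the number of facets whose shelling restriction is the whole facet, which equals $h_{d-1}(\IC(\beta))$.

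For part (2) I would invoke the edge-lexicographic shellability (EL-shellability) of geometric lattices. Fix a linear order on the atoms of $\Flats(\beta)$ and label each covering relation $A<A'$ by the smallest atom $a$ with $a\leqslant A'$ and $a\not\leqslant A$. The key combinatorial point is that in every closed interval $[A,A']$ of $\Flats(\beta)$ there is exactly one maximal chain whose label sequence is strictly increasing, and moreover this chain is lexicographically first among all maximal chains of $[A,A']$; this is where the matroid structure (semimodularity of the rank function, so that joins of atoms behave predictably) actually enters. Granting this, $\Flats(\beta)$ is EL-shellable, hence the order complex of the open interval $\Flats(\beta)\setminus\{\hat{0},\hat{1}\}$ is shellable; since $\Flats(\beta)$ is graded of rank $d=\rk\beta$, this order complex is pure of dimension $d-2$, so it is homotopy equivalent to a wedge of $(d-2)$-spheres. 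The number of spheres is the number of maximal chains of $\Flats(\beta)$ with strictly decreasing labels, which by the general EL-theory equals $(-1)^{d}\mu_{\Flats(\beta)}(\hat{0},\hat{1})$; for a geometric lattice the M\"obius function has sign $(-1)^{d}$, so this number is $|\mu_{\Flats(\beta)}(\hat{0},\hat{1})|$, as stated.

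The two genuinely non-routine ingredients are therefore: verifying that the lexicographic order on bases is a shelling of $\IC(\beta)$ using the exchange axiom in part (1), and verifying the ``unique increasing maximal chain, and it is lexicographically least'' property of the atom-ordering labelling on arbitrary intervals of $\Flats(\beta)$ in part (2). The remaining implications --- shellable $\Rightarrow$ Cohen--Macaulay, pure shellable $\Rightarrow$ wedge of top-dimensional spheres, and ``number of spheres $=$ number of homology facets $=$ (signed) M\"obius number'' via Philip Hall's formula --- are standard and may simply be cited from the indicated chapters of~\cite{Bjorner}. Since the proposition is quoted verbatim from~\cite{Bjorner}, in the text itself it suffices to refer to that source; the sketch above records the route one would take to reprove it.
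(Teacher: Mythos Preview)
The paper gives no proof of this proposition at all; it is stated purely as a citation to Bj\"orner, so there is nothing in the paper to compare your argument against. Your sketch is a faithful outline of the standard proofs from that reference (lexicographic shelling of bases via exchange for part~(1), EL-shellability via atom-ordering for part~(2)), and your closing remark that in the text a citation suffices is exactly what the paper does.
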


\subsection{Poset of invariant subrepresentations}

We recall that
\[
V=V(\alpha_{1})\oplus\ldots\oplus V(\alpha_{n})
\]
is a decomposition of a $T$-representation into real 2-dimensional irreducible representations.

\begin{prop}\label{propLocalLattice}
The poset $S(V)$ of invariant subrepresentations is naturally isomorphic to the geometric lattice of the linear matroid $\alpha=(\alpha_1,\ldots,\alpha_n)\subset \Hom(T,S^1)\otimes\Qo$ of weights of the representation.
\end{prop}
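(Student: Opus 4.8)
The plan is to exhibit an explicit isomorphism of posets between $S(V)$, the poset of connected components of fixed-point sets $V^H$ (equivalently, $T$-invariant subrepresentations of $V$ that arise this way), and $\Flats(\alpha)$, the lattice of flats of the weight matroid $\alpha=(\alpha_1,\ldots,\alpha_n)$. The key dictionary is that a closed subgroup $H\subseteq T$ has Lie algebra annihilator $\Pi_H=\{v\in\ttt^*\mid \langle v,\hhh\rangle=0\}$ where $\hhh=\mathrm{Lie}(H)$, and that for $V=V(\alpha_1)\oplus\cdots\oplus V(\alpha_n)$ one has $V^{H}=\bigoplus_{i:\,\alpha_i\in\Pi_H}V(\alpha_i)$ whenever $H$ is connected. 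So to a connected subgroup $H$ I associate the flat $A(H)=\{i\in[n]\mid \alpha_i\in\Pi_H\}$, and conversely to a flat $A$ (cut out by a subspace $\Pi\subseteq\ttt^*$) I associate the subrepresentation $V_A=\bigoplus_{i\in A}V(\alpha_i)$. The first step is to check this is well defined on the level of invariant submanifolds: different connected $H$ with the same fixed set give the same $A(H)$, and every invariant subrepresentation $V^H$ has the form $V_A$ for the flat $A$ determined by the weights occurring in it. Since $V^H$ is already connected (it is a linear subspace), $S(V)$ is literally the poset of subrepresentations of this form, ordered by inclusion.

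Next I would verify the two maps $H\mapsto A(H)$ (passing to $V^H$) and $A\mapsto V_A$ are mutually inverse order-isomorphisms. Monotonicity in both directions is immediate: $V^{H}\subseteq V^{H'}$ iff the set of weights vanishing on $\hhh'$ contains those vanishing on $\hhh$ — one direction is clear, and for the converse I use that the weights span $\ttt^*$ (Construction~\ref{conTangentAndRankDim}), so the subspace $\langle\alpha_i\mid i\in A\rangle$ is exactly the annihilator of $\bigcap_{i\in A}\ker\alpha_i$, making the correspondence between flats and subgroups of the form $\bigcap\ker\alpha_i$ a genuine bijection compatible with inclusion. It also matches the gradings: $\mathrm{rk}$ of a flat $A$ is $\dim\langle\alpha_i\mid i\in A\rangle$, which equals $\dim(T/T_{V_A})=\mathrm{rk}$ of the face $V_A$ by the spanning argument, and the complexity statement $\com X_F=\dim X_F-\rk F$ from Construction~\ref{conCommonStabilizer} is consistent with this. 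Having a rank-preserving poset isomorphism onto $\Flats(\alpha)$, which is a geometric lattice by the general theory of linear matroids (and by Proposition~\ref{propMatroidsAcycl} has all the desired shellability properties), completes the proof; combined with Lemma~\ref{lemUpperToLocal} this also finishes Theorem~\ref{thmMainUpperAcycl}.

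The one point that requires genuine care — and which I expect to be the main obstacle — is the role of \emph{connected} subgroups and the question of which subrepresentations actually occur as $S(V)$-elements. A priori an arbitrary closed subgroup $H\subseteq T$ (with possibly many components) can have $V^H$ strictly smaller than $V^{H_0}$ where $H_0$ is the identity component, so one must argue that in the definition of a face submanifold (Definition~\ref{definFacesOrbitSpaceGeneral}, Lemma~\ref{lemFaceNearFixedPoint}) it suffices to range over connected $H$, and that the resulting subspaces are exactly the $V_A$ for $A$ a flat. Lemma~\ref{lemFaceNearFixedPoint} already tells us every $V_A$ (arising near the origin, which here is the only fixed point) is a face submanifold, so the content is the reverse inclusion: every face submanifold of $V$ is some $V_A$. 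This follows because a face is the closure of a component of $Q_i\setminus Q_{i-1}$ containing the origin, the common stabilizer $T_F$ of such a face is a closed subgroup, and $X_F=V^{T_F}$ is a \emph{linear} subspace invariant under $T$, hence a sum of weight lines, hence of the form $V_A$; and $A$ is automatically a flat because $\langle\alpha_i\mid i\in A\rangle$ is the annihilator of $\mathrm{Lie}(T_F)$. Once this is pinned down the rest is the bookkeeping sketched above.
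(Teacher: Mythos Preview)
Your proposal is correct and follows essentially the same route as the paper: both arguments set up the dictionary $H\leftrightarrow A_H=\{i\mid\alpha_i\in\hhh^\bot\}$ via the computation $V^H=\bigoplus_{\alpha_i\in\hhh^\bot}V(\alpha_i)$, and then invert it by sending a flat $A$ (with span $\Pi$) to $V^{H}$ where $H$ is the connected subgroup with Lie algebra $\Pi^\bot$. If anything, you are more careful than the paper on the one delicate point---the paper writes ``let $H$ be an arbitrary subgroup'' but then silently works only with the Lie algebra $\hhh$, whereas you explicitly isolate the issue that disconnected $H$ can give strictly smaller $V^H$ and explain why, via Lemma~\ref{lemFaceNearFixedPoint} and the definition of faces, it suffices to range over connected subgroups.
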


\begin{proof}
Let $H\in S(T)$ be an arbitrary subgroup. Consider
\[
V^H=\left\{v=(v_1,\ldots,v_n)\in V=\bigoplus\nolimits_{i=1}^nV(\alpha_i)\mid Hv=v\right\}.
\]
Since $V$ splits in the sum of irreducible representations, we can treat each ``coordinate'' $v_i\in V(\alpha_i)$ separately. Let $\hhh\subset\ttt$ be the tangent algebra of the subgroup $H\subset T$. The equality $Hv_i=v_i$ leads to the following alternative:
\begin{enumerate}
  \item either $\alpha_i(\hhh)=0$ (equiv., $\alpha_i\in \hhh^\bot$) and $v_i$ is any vector from $V(\alpha_i)$;
  \item or $\alpha_i(\hhh)\neq 0$ and necessarily $v_i=0$.
\end{enumerate}
This alternative implies the following decomposition
\[
V^H=\bigoplus_{\alpha_i\in \hhh^\bot}V(\alpha_i).
\]

So far, the invariant subrepresentation (which is automatically a face submanifold, since it contains the fixed point $0$) is a certain coordinate subspace of $V$. This coordinate subspace is encoded by the collection of indices $A_H=\{i\in [m]\mid\alpha_i\in \hhh^\bot\}$. Since $A_H$ is determined by the intersection of $\alpha$ with the vector subspace $\hhh^\bot$, the subset $A_H$ is a flat: we have $A_H\in \Flats(\alpha)$.

Now we consider the map of posets $\psi\colon \Flats(\alpha)\to S(V)$ defined as follows. Let $A=\{\alpha_i\}\cap\Pi\in \Flats(\alpha)^*$ be a flat, where $\Pi$ is the linear span of $\{\alpha_i\mid i\in A\}$. Since $\alpha_i$'s are integral, the vector subspace $\Pi\subseteq\ttt^*$ is rational. Hence its orthogonal $\Pi^\bot$ is a tangent algebra $\hhh$ to some closed subgroup $H\subset T$. We set $\psi(A)=V^H=\bigoplus_{\alpha_i\in \hhh^\bot}V(\alpha_i)$. The bigger is $A$, the smaller is $H$, hence the bigger is the invariant subrepresentation $V^H$. Therefore $\psi$ is the monotonic map of posets. It is injective by construction and surjective by the arguments in the first part of the proof.
\end{proof}

Lemma~\ref{lemUpperToLocal} and Proposition~\ref{propLocalLattice} finish the proof of Theorem~\ref{thmMainUpperAcycl}. Indeed, if $s\in S(X)$ is any element of $S(X)$, then there exists a fixed point $x\in X^T$ such that $s\geqslant x$ (by the definition of face submanifolds). Therefore $S(X)_{\geqslant s}$ is a subposet in $S(X)_{\geqslant x}$ consisting of elements greater then the given element $s$. Proposition~\ref{propLocalLattice} and Lemma~\ref{lemUpperToLocal} imply that $S(X)_{\geqslant x}$ is a geometric lattice $\Flats(\alpha)$. It is well-known that the subposet $\Flats(\alpha)_{\geqslant t}$ is again a geometric lattice, for any element $t\in \Flats(\alpha)$.

\subsection{Examples and constructions}

Recall from the introduction that a poset $S$ is called locally geometric if for any $s\in S$, the subposet $S_{\geqslant s}$ is a geometric lattice. In a much similar fashion, we can define dually locally geometric posets as the posets whose lower order ideals $S_{\leqslant s}$ are geometric lattices. Equivalently, $S$ is dually locally geometric if and only if $S^*$ (the poset with reversed order) is locally geometric.

According to Theorem~\ref{thmMainUpperAcycl}, many examples of locally geometric posets are provided by the face posets of torus actions.

\begin{ex}
We have proved in Proposition~\ref{propLocalLattice}, that a linear $T$-representation $V$ gives a poset $S(V)$ which is the lattice of flats for the matroid of weights of the representation. Certainly, every geometric lattice is a locally geometric poset. If a geometric lattice (or the corresponding matroid) is not representable over $\Qo$, then it is not isomorphic to the face poset of any torus action.
\end{ex}

\begin{ex}
Consider a locally standard action of $T^n$ on $X^{2n}$, i.e. the action locally modeled by the standard action $T^n$ on $\Co^n$. At each fixed point $x\in X^{2n}$ the weights $\alpha_{x,1},\ldots,\alpha_{x,n}$ form a basis of $\Hom(T^n,S^1)\otimes\Qo$. The corresponding geometric lattice is therefore isomorphic to the Boolean lattice $B_n$. Hence the face poset $S(X)$ has the property that for any $s$ the subposet $S(X)_{\geqslant s}$ is a Boolean lattice. In other words, $S(X)^*$ is a simplicial poset.

The relation between locally standard actions and simplicial complexes (and, more generally, simplicial posets) is certainly well-known and well-studied in toric topology, see~\cite{BPnew}.
\end{ex}

The next few examples show how to construct new locally geometric posets from a given geometric lattice.

\begin{ex}\label{exCompactify}\emph{ Compactification.}
Consider the following construction. Let $S$ be a geometric lattice and $\hat{0}$ its bottom element. Let us double the bottom element: $\overline{S}=S\sqcup\{\hat{0}'\}$, where $\hat{0}'<s$ for any $s\neq\hat{0}$, and $\hat{0}'$, $\hat{0}$ are incomparable. It is easily seen that $\overline{S}$ is a locally geometric poset.

Posets of this form are combinatorial counterparts of the Alexandroff compactifications of linear representations. Let $V\cong\Ro^{2n}$ be a linear representation of a torus $T$. There is an induced $T$-action on the Alexandroff's one-point compactification $\overline{V}=V\sqcup\{\infty\}\cong S^{2n}$. Each (linear) face submanifold $F\subset V$ except $F=\{0\}$ gives a face submanifold $F\sqcup\{\infty\}$ of $\overline{V}$. On the other hand, $\infty$ is a fixed point of the action on $\overline{V}$ contained in all proper face submanifolds --- it corresponds to $\hat{0}'$. Hence there is a natural isomorphism
\[
S(\overline{V})\cong \overline{S(V)}.
\]
\end{ex}

\begin{ex}\label{exProjectivize} \emph{Projectivization.}
Let $S$ be a geometric lattice of rank $k$. It is easily seen that the poset $\Pp(S)=S\setminus\hat{0}$ is a locally geometric poset of rank $k-1$. We will call $\Pp(S)$ a projectivization of $S$. The name is explained similarly to the previous example.

Let $V$ be a linear $T$-representation. With some choice of signs of the weights we get an isomorphism $V=V(\alpha_1)\oplus\cdots\oplus V(\alpha_n)\cong\Co^n$, in other words, we get $T$-invariant complex structure on $V$. Then we are able to take the projectivization $\Pp(V)\cong\CP^{n-1}$, the smooth variety of all complex lines in $V$. The linear action on $V$ induces the action of $T$ on $\Pp(V)$. The face submanifolds of $\Pp(V)$ are all vector face submanifolds of $V$ except $\{0\}$. Hence there is a natural isomorphism
\[
S(\Pp(V))=\Pp(S(V)).
\]
Since the induced action of $T$ on $\Pp(V)$ has diagonal torus as its non-effective kernel $\Delta(T^1)\subset T$, so the rank of the corresponding effective action is reduced by~1.
\end{ex}

\begin{rem}
Example~\eqref{exProjectivize} suggests that there might exist more general constructions: combinatorial analogues of grassmanization and flagization. However, these constructions should be highly nontrivial. Even if one starts with the standard representation $\Co^n$ of a torus $T^n$ (whose face poset is the boolean lattice), the posets of faces of the Grassmann manifold $\Gr_{n,k}$ and full flag variety $\Fl_n$ are quite complicated.
\end{rem}

\section{Dimension coherence condition}\label{secCoherentPosets}

By definition, any locally geometric poset carries the rank function. The rank function is compatible with the rank function on a geometric lattice: for any minimal element $x\in S$, and for any element $s\geqslant x$ the rank of $s$ in $S$ is equal to the rank of $s$ in the geometric lattice $S_{\geqslant x}$. For the face posets $S(X)$ of torus actions the rank function $\rk F$ indicates the effective dimension of a torus action on a face $F$. The importance of the rank function follows from the fact that it induces the filtration~\eqref{eqEquivFiltrGeneralX} of $X$ by the equivariant skeleta. This filtration is widely used in the analysis of equivariant cohomology, see for example~\cite{FP,Franz}.

\begin{con}\label{conDimFunctionTopology}
There is another important data attached to $S(X)$ --- the topological dimensions of faces. For each face $F\in S(X)$, the corresponding face submanifold $X_F$ has even dimension (over $\Ro$). One can thus introduce a function $\drk\colon S(X)\to\Zo_{\geqslant0}$ by
\[
\drk(F)=\frac{1}{2}\dim X_F \mbox{ for any } F\in S(X).
\]
The arguments from Construction~\ref{conTangentAndRankDim} imply that $\drk(F)\geqslant\rk(F)$. Moreover, $\drk$ is strictly monotonic function on $S(X)$: if $F\subset F'$, then $\drk(F)<\drk(F')$. We can then define the face-complexity function
\[
\com(F)=\drk(F)-\rk(F)\geqslant 0.
\]
It is not difficult to prove (and it was proved in~\cite[Lm.3.1]{AyzMasEquiv}), that $\com$ is non-strictly monotonic on $S(X)$ as well.
\end{con}

The following observations are quite straightforward, however they can be used to determine the function $\drk$ in many cases without a priori knowledge of the topology of~$X$. We recall that the elements of rank $1$ of a geometric lattice are called atoms.

\begin{lem}\label{lemSXisCoherent}
Let $S(X)$ be the face poset of a manifold $X$ with a torus action having isolated fixed points. Then, for any minimal element $x\in S(X)$ (i.e. a fixed point) one has
\[
\dim X=2\drk(\hat{1})=\sum_{a\in S(X), a>x, \rk a=1}2\drk(X_a).
\]
The summation is taken over all atoms of $S(X)_{\geqslant x}$.
\end{lem}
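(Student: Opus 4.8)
The plan is to reduce the global statement about $X$ to the local statement about the tangent representation at $x$, using Lemma~\ref{lemUpperToLocal}, and then to compute the dimension count inside the tangent representation directly. First I would fix a minimal element $x \in S(X)$, i.e. an isolated fixed point, and pass to the tangent representation $V = T_xX \cong \bigoplus_{i=1}^n V(\alpha_{x,i})$. By Lemma~\ref{lemUpperToLocal} the poset $S(X)_{\geqslant x}$ is isomorphic to $S(V)$, and this isomorphism is rank-preserving; moreover, for a face submanifold $Y \ni x$ the isomorphism sends $Y$ to the tangent subspace $T_xY = (T_xX)^H$, so that $\dim X_Y$ near $x$ equals $\dim (T_xX)^H$, i.e. $\drk(X_Y) = \frac12 \dim \psi(Y)$ with $\psi$ the isomorphism $S(X)_{\geqslant x} \cong S(V)$. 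In particular $\drk(\hat 1) = \frac12 \dim V = n$, which gives the first equality $\dim X = 2\drk(\hat 1)$ once we recall $\dim X = 2n$ (the fixed point is isolated, so there is no trivial summand).

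Next I would identify the atoms of $S(X)_{\geqslant x}$ under this isomorphism. By Proposition~\ref{propLocalLattice}, $S(V) \cong \Flats(\alpha)$ where $\alpha = (\alpha_1, \dots, \alpha_n)$; the atoms of $\Flats(\alpha)$ are the rank-$1$ flats, and each rank-$1$ flat $A$ is the set of indices $i$ such that $\alpha_i$ is a (nonzero) scalar multiple of a fixed weight direction. Since all $\alpha_i$ are nonzero (isolated fixed point), every index $i \in [n]$ lies in exactly one atom: the atoms partition $[n]$ according to the one-dimensional rational subspaces $\langle \alpha_i \rangle \subset \ttt^*$. For an atom $A$ the corresponding invariant subrepresentation is $V^{H_A} = \bigoplus_{i \in A} V(\alpha_i)$, hence $\drk(X_a) = \frac12 \dim V^{H_A} = |A|$ for the atom $a$ corresponding to $A$. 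Summing over all atoms and using that they partition $[n]$ gives $\sum_{a \text{ atom}} 2\drk(X_a) = \sum_A 2|A| = 2n = \dim X$, which is exactly the asserted second equality.

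Putting these together completes the proof: the first equality is the observation $\dim X = 2n = 2\drk(\hat 1)$, and the second is the partition identity for the atoms of the geometric lattice $\Flats(\alpha) \cong S(X)_{\geqslant x}$. I would also remark, for cleanliness, that the formula is independent of the choice of $x$, since each side equals $\dim X$; this is consistent with $\drk$ and the atom structure being genuinely local data glued along the poset.

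The main obstacle, such as it is, is bookkeeping rather than conceptual: one must be careful that the poset isomorphism of Lemma~\ref{lemUpperToLocal} really does match $\drk$ on $S(X)$ with $\frac12 \dim(-)$ on $S(V)$ — this uses the slice theorem (Proposition~\ref{propSliceThm}), which says a neighborhood of $x$ is equivariantly diffeomorphic to $V$, so that near $x$ a face submanifold $X_Y$ looks exactly like the linear subspace $T_xY \subseteq V$, and since $X_Y$ is connected its dimension is determined by this local model. The other point to handle with care is the hypothesis that fixed points are isolated, which is what forces all weights $\alpha_{x,i}$ to be nonzero (Construction~\ref{conTangentAndRankDim} and the remark after Proposition~\ref{propSliceThm}); without it some summands would contribute to a trivial $\Ro^{\dim X - 2n}$ factor and the counting would break. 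No deep input beyond the already-established Lemma~\ref{lemUpperToLocal} and Proposition~\ref{propLocalLattice} is needed.
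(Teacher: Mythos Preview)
Your proposal is correct and follows essentially the same approach as the paper: pass to the tangent representation via Lemma~\ref{lemUpperToLocal} and Proposition~\ref{propLocalLattice}, identify the atoms of $S(X)_{\geqslant x}$ with the rank-$1$ flats of the weight matroid, observe that these flats partition $[n]$ (since all $\alpha_{x,i}$ are nonzero), and sum the dimensions $2|A|$ of the corresponding coordinate subspaces to recover $2n=\dim X$. Your write-up is in fact more careful than the paper's in justifying that the poset isomorphism matches $\drk$ with $\tfrac12\dim(-)$ via the slice theorem.
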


\begin{proof}
As before, we look at the tangent representation $T_xX$ and the geometric lattice $S(T_xX)\cong S(X)_{\geqslant x}$. The atoms of $S(T_xX)$ are the 1-dimensional flats in the linear matroid determined by the tangent weights $\alpha_1,\ldots,\alpha_n$ at $x$. Therefore, any atom $a$ is spanned by some weight, and the corresponding dimension-value $\drk(a)$ equals the number of weights collinear to $a$. Therefore the summands in the decomposition of $T_xX$ into irreducibles can be collected into groups as follows:
\[
T_xX\cong \bigoplus_{i=1}^nV(\alpha_i)=\bigoplus_{a\in S(T_xX), \rk a=1}\bigoplus_{\alpha_i\in a}V(\alpha_i).
\]
It is easily seen that $2\drk(a)=\dim \bigoplus_{\alpha_i\in a}V(\alpha_i)=\#\{i\mid\alpha_i\in a\}$ and $\dim X=2\drk(\hat{1})=2n$, which proves the desired formula.
\end{proof}

\begin{rem}\label{remCoherentRemark}
Notice that the statement of Lemma~\ref{lemSXisCoherent} applies not only to $\hat{1}\in S(X)$, but to any element of $S(X)$ in a similar way. Indeed, if $F\in S(X)$ is a face, then $S(X)_{\leqslant F}$ is isomorphic to $S(X_F)$ for the corresponding face submanifold $X_F$. Therefore, for any fixed point $x\leqslant F$, we have
\[
\drk(F)=\sum_{x<a\leqslant F, \rk a=1}\drk(X_a).
\]
The summation over empty set is assumed zero.
\end{rem}

Lemma~\ref{lemSXisCoherent} and Remark~\ref{remCoherentRemark} suggest the following definition.

\begin{defin}\label{definCoherentPosets}
Let $S$ be a locally geometric poset, and $\A$ denote the set of its atoms, $\A=\{a\in S\mid \rk a=1\}$. Let $d\colon\A\to\Zo_{>0}$ be a function valued in positive integers. The pair $(S,d)$ is called a coherent locally geometric poset if, for any $s\in S$, the number
\begin{equation}\label{eqDRKnumberAxiom}
\sum\nolimits_{a\in\A, x<a\leqslant s}d(a)
\end{equation}
does not depend on the choice of the minimal element $x\leqslant s$, $\rk x=0$. The number given by~\eqref{eqDRKnumberAxiom} is denoted $\drk(s)$. The monotonic function $\drk\colon S\to\Zo_{\geqslant 0}$ defined this way will be called the dimension-rank of a coherent locally geometric poset.
\end{defin}

In terms of this definition, Lemma~\ref{lemSXisCoherent} and Remark~\ref{remCoherentRemark} imply that the face poset $S(X)$ of any torus action is coherent, and the function $\drk$ tracks the dimensions of face submanifolds.

To restrict ourselves to a reasonable yet interesting class of examples we give the following definition.

\begin{defin}
A locally geometric poset $S$ is called GKM-coherent, if $(S,d_{GKM})$ is coherent for a function $d_{GKM}$ taking value $1$ at each atom of $S$.
\end{defin}

\begin{rem}\label{remGKMtheoryCoherent}
This definition is motivated by the GKM-theory, see Section~\ref{secGKM}. Condition 3 in Definition~\ref{defGKMmfd} implies $d(a)=1$. This observation implies that the poset $S(X)$ of a GKM-manifold is necessarily GKM-coherent. 
\end{rem}

\begin{ex}\label{exStupidUnion}
There exist examples of locally geometric posets, which are not GKM-coherent. To construct an example, we take two geometric lattices $S_1,S_2$ of equal ranks, but having different numbers of atoms. We then take their disjoint union, and identify their top elements:
\[
S=S_1\sqcup S_2/(\hat{1}_{S_1}\sim \hat{1}_{S_2}),
\]
making $\hat{1}=[\hat{1}_{S_1}]=[\hat{1}_{S_2}]$ the top element. The resulting poset $S$ is locally geometric by construction. However, the number $\drk(\hat{1})=\sum\nolimits_{a\in\A, x<a}d(a)$ depends on whether $x=\hat{0}_{S_1}$ or $x=\hat{0}_{S_2}$.
\end{ex}

\section{Faces in GKM-graphs vs. faces in GKM-manifolds}\label{secGKM}

\subsection{GKM-theory}

\begin{defin}\label{definIndependent}
A $T$ action on $X$ is called $j$-independent, if, for any fixed point $x\in X^T$, any subset of cardinality $\leqslant j$ of tangent weights $\alpha_{x,1},\ldots,\alpha_{x,n}$ is linearly independent.
\end{defin}

In some previous works we also used the term ``action in $j$-general position'' instead of $j$-independent.

Let us recall the basics of GKM-theory. The details can be found in the original paper of Goresky--Kottwitz--MacPherson~\cite{GKM}, or in a review~\cite{Kur}, which fits our topological setting better.

\begin{defin}\label{defGKMmfd}
A $2n$-dimensional (orientable connected) compact manifold $X$ with an action of $T=T^k$ is called \emph{a GKM-manifold}, if the following conditions hold:
\begin{enumerate}
\item $X$ is equivariantly formal;
\item The fixed point set $X_0=X^T$ is finite and nonempty;
\item Each face submanifold of rank $1$ has dimension $2$.
\end{enumerate}
We call an action weakly GKM, if it only satisfies conditions 2 and 3.
\end{defin}

\begin{rem}\label{rem2indepIsWeakGKM}
The weak GKM-action condition is equivalent to $2$-independence. Indeed, if there exist two collinear weights they fall in the same 1-flat of the matroid $\Flats(\{\alpha_{x,i}\})$, and therefore the corresponding face submanifold of $X$ would have dimension at least $4$. On the other hand, if an action is $2$-independent, each tangent weight $\alpha_{x,1}$ spans a 1-dimensional flat in $\Flats(\alpha_{x,i})\cong S(X)_{\geqslant x}$, hence the corresponding element of $S(X)$ has dimension 2.
\end{rem}

In a weak GKM manifold, every face submanifold of rank $1$ is a 2-dimensional orientable surface with an action of a circle. Hence it is a 2-sphere with exactly 2 fixed points. The union of all face submanifolds of rank $\leqslant 1$ is therefore a union of $2$-spheres, intersecting in common fixed points. The 1-skeleton $S(X)_1$ is a graph (without loops, but it might have multiple edges). Each edge $e$ of this graph (i.e. atom of $S(X)$) comes equipped with the weight $\alpha(e)\in\Hom(T,S^1)$, that is the weight of the tangent representation to the 2-sphere $X_e$ at any of its two fixed points. This gives rise to the definition of a GKM-graph. This is similar to the definition of GKM-graph given in~\cite{BGH}, however we avoid mentioning connection at the moment.

\begin{defin}\label{defGKMgeneral}
An abstract GKM-graph $\G$ is a finite $n$-regular connected graph $(V,E)$ (possibly with multiple edges) equipped with a function $\alpha\colon E\to \Hom(T^k,S^1)\cong\Zo^k$, which satisfies
\begin{enumerate}
  \item $\alpha(xy)=\pm\alpha(yx)$ for all edges $e=(xy)$,
  \item for each edges $e_1=(xy)$ and $e_2=(yz)$, the values $\alpha(e_1),\alpha(e_2)$ are linearly independent in $\Hom(T^k,S^1)\otimes\Qo$, and there exists an edge $e_3=(zu)$ such that $\alpha(e_3)$ lies in the rational span $\langle \alpha(e_1),\alpha(e_2)\rangle$.
\end{enumerate}
The function $\alpha$ is called \emph{an axial function}. In the signed version of a GKM-graph, it is required that $\alpha$ is defined on oriented edges, and changes sign if the orientation is reversed: $\alpha(xy)=-\alpha(yx)$.
\end{defin}

\begin{con}\label{conRankInGKMgraph}
For a vertex $x$ of a graph $\G$, let $\str(x)$ denote the set of edges incident to $x$ (assumed outgoing in the signed case).
The rational span 
\begin{equation}\label{eqFlatOfGraph}
\Pi_{\G}=\langle\alpha(e)\mid e\in \str(x)\rangle\in\Hom(T,S^1)\otimes\Qo
\end{equation} 
does not depend on a vertex $v$. Indeed, item (2) of the definition implies that this span coincides for adjacent vertices, and the rest follows from connectivity of $\G$. The dimension of this vector space is called the rank of the GKM-graph $\G$. Usually, it coincides with $k$, --- this situation corresponds to effective torus actions. The number $n=|\str(x)|$ for any $x$ is called the dimension of $\G$.
\end{con}

\begin{defin}\label{defGKMconnection}
A GKM-graph $\G$ with connection is a signed GKM-graph equipped with additional data, the connection defined as follows. \emph{A connection} $\theta$, is a collection of bijections $\theta_{(xy)}\colon \str(x)\to \str(y)$ for all edges $(xy)$ of a graph, satisfying the properties:
\begin{enumerate}
 \item $\theta_{e}e=e$ for all edges $e$;
 \item $\theta_{(yx)}^{-1}=\theta_{(xy)}$;
 \item The integral vector $\alpha(\theta_{(xy)}e)-\alpha(e)$ is collinear to $\alpha(xy)$ for any $e\in\str(x)$.
\end{enumerate}
\end{defin}

The following statement is standard (see e.g. \cite[Thm.3.4]{BGH} for item 3), but we review its proof in the terms of this paper.

\begin{prop}\label{propGKMmfdToGrph}
The following assertions hold true
\begin{enumerate}
  \item If a $T$-action on $X$ is $2$-independent, then $\G(X)$ is an abstract GKM-graph.
  \item If a $T$-action is $j$-independent, then for any $x\in X^T$, any $l\leqslant j-1$ tangent weights span a unique face submanifold of $X$.
  \item If an action is $3$-independent, then there is a canonical connection on its GKM-graph.
\end{enumerate}
\end{prop}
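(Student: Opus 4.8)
The plan is to treat the three assertions separately, each time pulling the problem back to the local matroid model $S(X)_{\geqslant x}\cong\Flats(\alpha_x)$ furnished by Lemma~\ref{lemUpperToLocal} and Proposition~\ref{propLocalLattice}. For item (1) I would start from Remark~\ref{rem2indepIsWeakGKM}: $2$-independence is the same as the weak GKM condition, so every rank-$1$ face submanifold is a $2$-sphere with exactly two fixed points, which makes $\G(X)$ a graph, and it is $n$-regular since at each fixed point $x$ the $n$ pairwise independent weights $\alpha_{x,i}$ span $n$ distinct $1$-flats of $\alpha_x$; connectivity of $\G(X)$ is a standard fact (automatic under equivariant formality, and in general one works componentwise). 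Axiom (1) of Definition~\ref{defGKMgeneral} is the elementary computation that the tangent $S^1$-weights at the two poles of $X_e\cong S^2$ are opposite. For axiom (2), given consecutive edges $e_1=(xy)$ and $e_2=(yz)$, independence of $\alpha(e_1),\alpha(e_2)$ is exactly $2$-independence at $y$; to produce $e_3$ I would pass to the rank-$2$ face submanifold $X_F$ through $y$, namely the component of $X^H$ with $\mathrm{Lie}(H)=\Pi^{\perp}$, $\Pi=\langle\alpha(e_1),\alpha(e_2)\rangle$. Comparing defining subgroups shows $X_{e_2}\subseteq X_F$, so $z\in X_F$; since the effective $T/T_F$-action on $X_F$ has two-dimensional weight lattice, $X_F$ has at least two tangent weights at $z$, one of them $\pm\alpha(e_2)$, so some further weight $\alpha_{z,i}\in\Pi$ occurs and its edge at $z$ is the required $e_3$.

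For item (2), fix $x\in X^T$ and weights $\alpha_{x,i_1},\ldots,\alpha_{x,i_l}$ with $l\leqslant j-1$. Being fewer than $j$ in number they are linearly independent, spanning an $l$-dimensional rational subspace $\Pi$. If some index $i\notin\{i_1,\ldots,i_l\}$ had $\alpha_{x,i}\in\Pi$, then $\{\alpha_{x,i},\alpha_{x,i_1},\ldots,\alpha_{x,i_l}\}$ would be a linearly dependent set of cardinality $l+1\leqslant j$, contradicting $j$-independence; hence $\{i\mid\alpha_{x,i}\in\Pi\}=\{i_1,\ldots,i_l\}$, which is therefore a rank-$l$ flat of $\alpha_x$. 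By Lemma~\ref{lemUpperToLocal} and Proposition~\ref{propLocalLattice} it corresponds to a unique face submanifold $X_F\ni x$ with $T_xX_F=\bigoplus_{t=1}^{l}V(\alpha_{x,i_t})$, in particular $\dim X_F=2l$; this is the face submanifold spanned by the given weights, uniqueness being immediate from the poset isomorphism $S(X)_{\geqslant x}\cong\Flats(\alpha_x)$.

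For item (3), I would define the connection using item (2). Given an edge $(xy)$ and an edge $e\in\str(x)$, $e\neq(xy)$, let $X_F$ be the unique $4$-dimensional face submanifold spanned by $\alpha(xy)$ and $\alpha(e)$ (item (2) with $j=3$, $l=2$); since $X_F$ is $4$-dimensional and $2$-independent it has exactly two edges of $\G(X_F)$ at each fixed point, and as $X_{(xy)}\subseteq X_F$ one of the two at $y$ is $(yx)$, so I set $\theta_{(xy)}(e)$ equal to the other one and $\theta_{(xy)}((xy))=(yx)$. Property (1) of Definition~\ref{defGKMconnection} holds by construction; for bijectivity it is enough to prove injectivity, and if $\theta_{(xy)}(e)=\theta_{(xy)}(e')=:f$ with $e,e'\neq(xy)$ then $\alpha(f)$ is independent of $\alpha(xy)$ and lies in $\langle\alpha(xy),\alpha(e)\rangle\cap\langle\alpha(xy),\alpha(e')\rangle$, which forces $\langle\alpha(xy),\alpha(e)\rangle=\langle\alpha(xy),\alpha(f)\rangle=\langle\alpha(xy),\alpha(e')\rangle$, making $\alpha(xy),\alpha(e),\alpha(e')$ three weights at $x$ in a common $2$-plane, hence dependent — contradicting $3$-independence, so $e=e'$. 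Property (2), $\theta_{(yx)}\circ\theta_{(xy)}=\id$, follows because the $4$-dimensional face submanifold spanned by $\alpha(yx)$ and $\alpha(f)$ is the same component of the same fixed-point set as $X_F$ (both contain $x$ and $y$, joined through $X_{(xy)}$), so $\theta_{(yx)}$ sends $f$ back to the other edge of $X_F$ at $x$, namely $e$. Finally, for property (3), the normal bundle of $X_{(xy)}\cong S^2$ in the orientable $4$-manifold $X_F$ is a $T$-equivariant complex line bundle over $S^2$, and for such bundles the weights on the fibres over the two fixed points differ by an integer multiple of the base weight $\alpha(xy)$; once signs are chosen coherently (which also turns $\G(X)$ into the signed GKM-graph required by Definition~\ref{defGKMconnection}) this says precisely that $\alpha(\theta_{(xy)}e)-\alpha(e)$ is collinear to $\alpha(xy)$.

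The step I expect to be the actual work is item (3): one must simultaneously arrange that $\theta$ is well defined (this is where item (2) enters), that it is bijective (the one place where $3$-independence, rather than merely $2$-independence, is indispensable), and that it satisfies the compatibility axiom (3), the last relying on the classification of $T$-equivariant line bundles over $S^2$ and on a coherent choice of signs for the axial function. Items (1) and (2) are comparatively routine given Lemma~\ref{lemUpperToLocal} and Proposition~\ref{propLocalLattice}, the only slightly delicate point being the connectivity clause of Definition~\ref{defGKMgeneral} in item (1).
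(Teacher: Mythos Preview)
Your proof is correct and follows essentially the same route as the paper: items (1) and (2) are handled identically via the local isomorphism $S(X)_{\geqslant x}\cong\Flats(\alpha_x)$, and item (3) via the $4$-dimensional face submanifold produced by item (2). The only difference is one of detail: the paper compresses your construction of $\theta$ into the observation that the GKM-graph of each such $4$-dimensional face is a union of cycle graphs, on which the connection is forced, and it does not spell out the verification of axiom (3) of Definition~\ref{defGKMconnection} (your equivariant-line-bundle argument), treating this as standard with a reference to~\cite{BGH}.
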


\begin{proof}
(1) The axial function is constructed from the weights of torus actions on the corresponding 2-spheres. We only need to check item 2 of Definition~\ref{defGKMgeneral}. Let $x,y,z,u$, and $e_1,e_2$ be as in the definition. There exists face $F$ of rank 2 which corresponds to the 2-flat $\langle \alpha(e_1),\alpha(e_2)\rangle\in \Hom(T,S^1)\otimes\Qo$ via the isomorphism $S_{\geqslant y}\cong \Flats(\alpha_{z,i})$. On the other hand, $F$ lies in $S_{\geqslant z}\cong\Flats(\alpha_{z,i})$, hence there should exist at least one weight at $z$ which spans the same flat together with $\alpha(e_2)$.

(2) Here we need to prove that, for any $l\leqslant j-1$ tangent weights $\alpha_{x,1},\ldots,\alpha_{x,l}$, there exists a unique face submanifold $X_F$ of rank $l$ in $X$ with exactly these tangent weights. Since the action is $j$-independent, any $l\leqslant j-1$ tangent vectors at $x$ form a flat. This flat can be considered an element of $S(X)$ via the isomorphisms $S(X)_{\geqslant x}\cong S(TX)\cong \Flats(\{\alpha_{x,i}\})$ proved earlier.

(3) The last statement follows easily. Indeed, if the action is $3$-independent, then any $2$ tangent weights span a face submanifold $X_F$ of type $(2,2)$. Its GKM-graph is 2-regular, hence it is a union of cycle graphs. In a cycle graph, a connection can be defined uniquely which proves the statement.
\end{proof}

If $X$ is a GKM-manifold and $\G(X)$ is its GKM-graph, their notions of dimension and rank are consistent (of course, in the sense that real dimension of $X$ is twice the dimension of $\G(X)$). Notice that, whenever $X_F$ is a face submanifold of a weak GKM-manifold $X$, it is weak GKM as well and $\G(X_F)$ is a subgraph of $\G(X)$ equipped with the same axial function.

\begin{rem}\label{remNonConnected}
The proof of the last item of Proposition~\ref{propGKMmfdToGrph} may seem strange at a first glance: why do we take the union of cycle graphs if it is usually just a single cycle graph? Unfortunately, some anomalous situations may occur when a connected weak GKM manifold has disconnected GKM-graph. To get examples, take the connected sum of two GKM manifolds of the same type along their free orbits (the idea has much in common with Example~\ref{exNonFaceFace}). We want to avoid such anomalous situations, so we give the following definition.
\end{rem}

\begin{defin}\label{definNormalGKM}
A weak GKM manifold $X$ is called normal if any face submanifold of $X$ has connected GKM-graph.
\end{defin}

\begin{rem}
Notice that any GKM manifold (i.e. weakly GKM + equivariantly formal) is normal. In equivariantly formal case any face submanifold is formal as well. The $1$-skeleton of $2$-independent equivariantly formal action is connected. This fact is well known in the Hamiltonian case (see~\cite{GZ}) and in all known examples, and in general it is proved in the forthcoming paper~\cite{AyzMasSolo} (this statement appears implicitly in~\cite{AyzMasEquiv}).
\end{rem}

\subsection{Faces of GKM-graphs}

It is a standard approach in toric topology and toric geometry to analyze GKM-graphs instead of GKM-manifolds. Therefore it seems reasonable to study face posets defined directly from GKM-graphs. In this section we realize this idea.

\begin{defin}[{\cite[Def.1.4.2]{GZ}}]
Let $\G$ be an abstract GKM-graph with an axial function $\alpha$. A connected subgraph $\Hh\subset \G$ is called a face of rank $j$ if it is a GKM-graph of rank $j$. In the case $G$ has connection, a face $\Hh$ is called totally geodesic, if for any edge $xy\in \Hh$ there holds $\theta_{xy}(\str(x)\cap \Hh)=\str(y)\cap \Hh$.
\end{defin}

Let $S(\G)$ denote the poset of all faces of $\G$ ordered by inclusion, and $\Stg(\G)$ the poset of totally geodesic faces (if $\G$ has connection).

Unlike Theorem~\ref{thmMainUpperAcycl}, the posets $S(\G)$ and $\Stg(\G)$ need not be geometric posets.

\begin{ex}
Consider the GKM-graph $\G_6$ of the full flag manifold $\Fl_3$, shown in the top left corner of Fig.~\ref{figFlags}. It is assumed that the weight of each edge coincides with its direction on the plane $\Ro^2$ where the graph is drawn, and the connection $\theta_{yz}$ maps an edge $xy$ to a unique edge $zu$ which is not parallel to $xy$ on the plane. The faces of rank 0 and 1 are vertices and edges of the graph respectively. The rest of Fig.~\ref{figFlags} shows all faces of rank 2. All totally geodesic faces are shown in the top row. There are too many of them for the posets $S(\G_6)_{\geqslant x}$ and $\Stg(\G_6)_{\geqslant x}$ to be geometric lattices of rank 2.
\end{ex}

\begin{figure}[h]
\begin{center}
\includegraphics[scale=0.35]{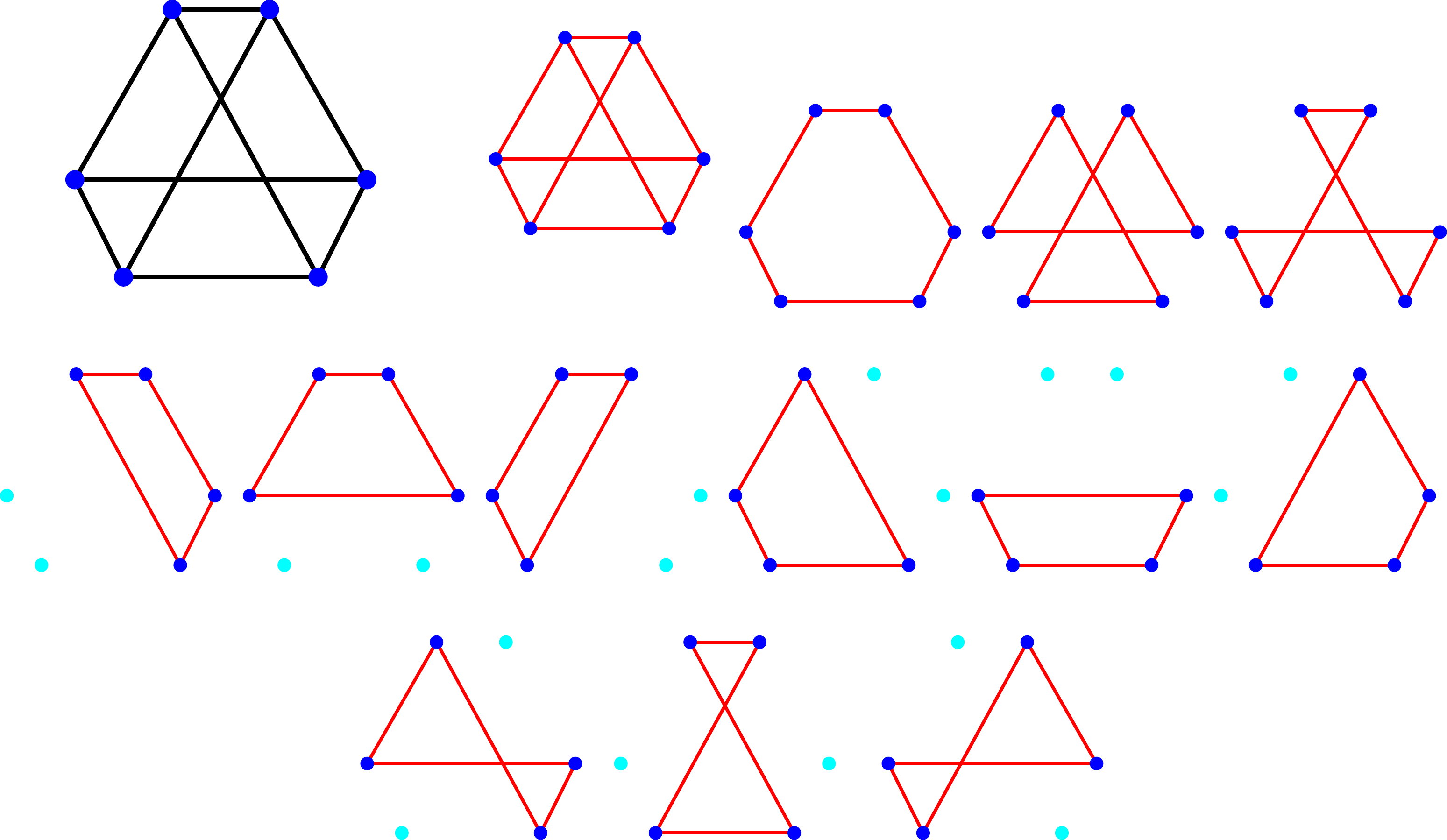}
\end{center}
\caption{GKM-graph $\G_6$ and the list of all its faces of rank $2$. The upper row shows totally geodesic subgraphs. Only the first of them corresponds to the actual face of the full flag manifold $\Fl_3$.}\label{figFlags}
\end{figure}

The aim of this subsection is to prove Theorem~\ref{thmCore}. Let us recall the required definitions. A map $f\colon S\to T$ between two posets is called monotonic if $s_1\leqslant s_2$ in $S$ implies $f(s_1)\leqslant f(s_2)$ in $T$. All maps of posets are assumed monotonic in the following. If there are two maps $f\colon S\to T$ and $g\colon T\to S$, we simply write them as $f\colon S\rightleftarrows T\colon g$.

\begin{defin}\label{defGaloisCon}
A pair of maps $f\colon S\rightleftarrows T\colon g$ is called a Galois connection (an order preserving Galois connection), if for any $s\in S$ one has $s\leqslant g(f(s))$, and for any $t\in T$ there holds $f(g(t))\leqslant t$. If one of the maps is an inclusion of a subposet, this map is called Galois insertion.
\end{defin}

To prove Theorem~\ref{thmCore} we actually prove a more general statement, where the condition of GKM-manifold is replaced by that of normal weak GKM-manifold.

\begin{prop}\label{propGaloisGeneral}
Let $X$ be a normal weak GKM-manifold. Then there exist Galois insertions
\[
S(X)\hookrightarrow S(\G(X)) \mbox{ and } S(X)\hookrightarrow \Stg(\G(X)).
\]
\end{prop}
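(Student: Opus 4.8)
The plan is to construct, for a normal weak GKM-manifold $X$, an order-preserving map $\Phi\colon S(\G(X))\to S(X)$ in the first case (and $\Phi_{tg}\colon \Stg(\G(X))\to S(X)$ in the second), so that together with the natural inclusion $\iota\colon S(X)\hookrightarrow S(\G(X))$ it forms a Galois connection; since $\iota$ is then automatically a Galois insertion, this proves the proposition. First I would make the inclusion $\iota$ precise: to a face submanifold $X_F\subseteq X$ we associate its GKM-graph $\G(X_F)$, which (as noted right after Proposition~\ref{propGKMmfdToGrph}) is a subgraph of $\G(X)$ with the restricted axial function, and which is a GKM-graph of rank $\rk F$; normality of $X$ guarantees this subgraph is connected, hence is a genuine face of $\G(X)$ in the sense of the preceding definition. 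Monotonicity of $\iota$ is clear: $F\subseteq F'$ gives $\G(X_F)\subseteq\G(X_{F'})$. One must also check $\iota$ is injective, which follows because $X_F$ is a connected component of $X^{T_F}$ and the vertex set of $\G(X_F)$ is $X_F^T$, so $X_F$ is recovered from its graph as the component of $X^{T_F}$ meeting $X_F^T$ (here $T_F$ is read off from the axial function on $\G(X_F)$). For $\Stg$ one notes additionally that $\G(X_F)$ is totally geodesic: the connection on $\G(X)$ is the canonical one coming from $3$-independence on rank-$2$ subgraphs, and transporting an edge of a face submanifold along an edge of that submanifold stays within the submanifold by Proposition~\ref{propGKMmfdToGrph}(2)–(3) applied inside $X_F$.

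Next I would define the retraction $\Phi$. Given a face $\Hh\in S(\G(X))$ (a connected rank-$j$ GKM-subgraph), consider the union $\bigcup_{x\in V(\Hh)} X_x$ where for each vertex $x$ of $\Hh$ we let $X_x$ denote the unique face submanifold through $x$ whose tangent weights are exactly $\{\alpha(e)\mid e\in\str(x)\cap\Hh\}$ — this exists and is unique by Proposition~\ref{propGKMmfdToGrph}(2) since the action is $2$-independent and the flat spanned by these weights has rank $j$ (using that $\Hh$ has rank $j$, so its edge-weights at any vertex span a $j$-dimensional rational subspace). Two candidates for $\Phi(\Hh)$ suggest themselves: the smallest face submanifold of $X$ containing all the $X_x$, or equivalently the component of $X^H$ (for $H$ the subgroup with $\hhh^\perp = \langle\alpha(e)\mid e\in E(\Hh)\rangle$) containing the vertices of $\Hh$. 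I would take this latter description as the definition of $\Phi(\Hh)$; it is manifestly a face submanifold (it contains fixed points), and $\Hh\subseteq\G(X)$ monotone in $\Hh$ gives $\Phi$ monotone. For the totally geodesic version the same formula works, giving $\Phi_{tg}$.

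It remains to verify the two Galois inequalities. For $F\in S(X)$ we need $F\leqslant\Phi(\iota(F))$: but $\iota(F)=\G(X_F)$ has edge-weights spanning exactly $\ttt^*/\mathfrak{t}_F^\perp$-complement, so the subgroup $H$ attached to $\G(X_F)$ is $T_F$, and $\Phi(\G(X_F))$ is the component of $X^{T_F}$ through $X_F^T$, which is $X_F$ itself; in fact we get equality $\Phi\circ\iota=\id$, confirming $\iota$ is a Galois insertion. For $\Hh\in S(\G(X))$ we need $\iota(\Phi(\Hh))\geqslant\Hh$, i.e. $\G(\Phi(\Hh))\supseteq\Hh$ as subgraphs: every vertex of $\Hh$ is a fixed point in $\Phi(\Hh)$, and every edge $e=(xy)$ of $\Hh$ has $\alpha(e)\in\langle\alpha(e')\mid e'\in E(\Hh)\rangle=\hhh^\perp$, so the corresponding $2$-sphere $X_e$ is fixed by $H$ and hence contained in the component $\Phi(\Hh)$ of $X^H$; thus $e\in E(\G(\Phi(\Hh)))$. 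The same argument applies verbatim to $\Stg$ once we know $\G(\Phi(\Hh))$ is a totally geodesic subgraph, which follows as in the discussion of $\iota$ above.

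The main obstacle I expect is the well-definedness and correct behaviour of $\Phi$ on the level of connectedness — specifically, checking that the various vertices of $\Hh$ land in a \emph{single} connected component of $X^H$, which is exactly where normality of $X$ is needed (a connected GKM-subgraph $\Hh$ of a non-normal $X$ could conceivably straddle two components, as in the anomalous connected-sum constructions of Remark~\ref{remNonConnected}). Establishing this cleanly requires relating connectivity of the subgraph $\Hh$ to connectivity of a union of $2$-spheres inside $X^H$ and then invoking that a face submanifold of $X$ is normal whenever $X$ is; I would isolate this as the key lemma. A secondary point of care is that for $\Stg$ one must confirm the image $\Phi_{tg}(\Hh)$ really lies in $S(X)$ and not merely that $\Phi_{tg}$ is defined on $\Stg(\G(X))$ — but since $\Phi_{tg}$ has the same formula as $\Phi$ and always produces a face submanifold, this is automatic, and the content is entirely in the inequality $\iota(\Phi_{tg}(\Hh))\geqslant\Hh$ holding within $\Stg$, which needs the totally-geodesic hypothesis on $\Hh$ only to compare with $\Stg(\G(X))$, not with $S(X)$.
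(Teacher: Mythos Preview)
Your approach matches the paper's: define $\iota$ by sending $X_F\mapsto \G(X_F)$, define the adjoint $\Phi$ by sending a GKM-subgraph $\Hh$ to the component of $X^H$ containing a vertex of $\Hh$ (with $H$ the connected subgroup whose Lie algebra annihilates the span of the edge weights of $\Hh$), and verify $\Phi\circ\iota=\id$ and $\iota\circ\Phi\geqslant\id$.

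Two clarifications. First, your ``main obstacle'' at the end is already dispatched by your own argument two paragraphs earlier: since each edge $e$ of $\Hh$ has $\alpha(e)\in\hhh^\bot$, the $2$-sphere $X_e$ lies in $X^H$ and joins the two endpoints of $e$, so connectivity of the graph $\Hh$ alone forces all its vertices into a single component of $X^H$. Normality plays no role in the well-definedness of $\Phi$; it is used only to ensure $\iota$ lands in connected subgraphs, exactly as you (and the paper) say. Second, you invoke $3$-independence to argue that $\G(X_F)$ is totally geodesic, but a weak GKM-manifold is only assumed $2$-independent (Remark~\ref{rem2indepIsWeakGKM}), so the canonical connection of Proposition~\ref{propGKMmfdToGrph}(3) need not exist. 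The paper itself simply declares the $\Stg$ case ``completely similar'' without further detail, so neither treatment spells this out; strictly speaking, for the $\Stg$ statement one should either assume $3$-independence or assume a given connection on $\G(X)$ compatible with the face structure.
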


\begin{proof}
Let us prove the statement for the case $S(\G(X))$, another poset $\Stg(\G(X))$ is completely similar. The inclusion map $\iota\colon S(X)\hookrightarrow S(\G(X))$ is constructed in an obvious manner since every face gives rise to its GKM-graph, which is a face of $\G(X)$ (the normality is needed to assure this graph is connected, required in the definition of GKM-manifold). We now construct the adjoint map $\pi\colon S(\G(X))\to S(X)$. Let $\Hh\subset\G$ be a face of rank $r$. This means by definition that the flat $\Pi_\Hh$ (which is independent of the vertex $x$ of $\Hh$, see~\eqref{eqFlatOfGraph}) has dimension $r$. Consider the subgroup $H\subset T$ which is the connected component of $1$ in $\Ker(\prod_{e\in\str(x)}\alpha\colon T\to \prod_{e\in\str(x)}S^1)$. Let us take the connected component of $X^H$ which contains any $x\in \Hh$. This is a face submanifold $X_F$ (see Lemma~\ref{lemFaceNearFixedPoint}). We set $\pi(\Hh)=X_F$. It is easily seen that $\iota(\pi(\Hh))\supseteq \Hh$ for any GKM-subgraph $\Hh$, and $\pi(\iota(X_F))=X_F$ for any face submanifold. Hence we get a Galois insertion.
\end{proof}

\begin{rem}
Proposition~\ref{propGaloisGeneral} essentially shows that among all GKM-subgraphs $\Hh$ with the given flat $\Pi_\Hh$ at $x$ there exists the greatest GKM-subgraph, corresponding to the actual face of a GKM-manifold (or normal weak GKM-manifold). This observation makes it algorithmically possible to restore the poset of actual faces of $X$, given its GKM-graph with axial function. The algorithm proceeds as follows.
\end{rem}

\begin{algor}\hfill
\begin{enumerate}
  \item List all elements $\Hh$ of $S(\G(X))$ (or $\Stg(\G(X))$), mark each element with the flat $\Pi_\Hh=\langle\alpha(e)\mid e\in\str(x)\rangle$ (for any vertex $x$ of $\Hh$).
  \item In the set of all subgraphs containing vertex $x$ and marked with the given flat $\Pi$, find the greatest-by-inclusion subgraph. Delete everything from this set except the chosen subgraph.
  \item The poset of the remaining subgraphs is isomorphic to $S(X)$.
\end{enumerate}
\end{algor}

\begin{rem}
In alternative terms, one may speak of $S(X)$ as the ``ranked core'' of both posets $S(\G(X))$ and $\Stg(\G(X))$. The relations between GKM-manifolds and the cores of finite topologies will be covered elsewhere.
\end{rem}

To conclude the paper we give a final remark serving as an invitation for future work.

\begin{rem}
It is classical in toric topology to consider torus actions of complexity zero (torus manifolds, quasitoric manifolds) and related constructions such as moment-angle manifolds, see~\cite{BPnew}. In all these cases, the underlying combinatorial objects are given by simplicial complexes, and more generally simplicial posets, in other words, the posets locally modelled by Boolean lattices. In this paper we have shown that geometrical lattices of flats in matroids serve as the building blocks for combinatorial constructions which encode more general torus actions. In the situation when a torus acts on (stably almost) complex manifolds respecting the complex structure, tangent weights of an action are determined without sign ambiguity as vectors in the rational space $\Hom(T,S^1)\otimes\Qo$. We expect that the theory of \emph{oriented matroids} should be a useful tool in the study of such actions. In particular, we suppose that the signs of fixed points --- playing an important role in the study of quasitoric manifolds, --- for general torus actions should be replaced by the ``dual oriented matroids at fixed points''. Here we want to mention the current work of Kuroki et al.~\cite{KurGale} who proceed in this direction by introducing Gale duality in toric topology.
\end{rem}

\end{document}